\newtheorem{Theorem}{Theorem}[section]
\newtheorem{Proposition}[Theorem]{Proposition}
\newtheorem{Lemma}[Theorem]{Lemma}
\newtheorem{Corollary}[Theorem]{Corollary}
\theoremstyle{definition}
\newtheorem{Definition}[Theorem]{Definition}
\newtheorem{Remark}[Theorem]{Remark}
\newcommand{\bTheorem}[1]{
\begin{Theorem} \label{T#1} }
\newcommand{\eT}{\end{Theorem}}
\newcommand{\bProposition}[1]{
\begin{Proposition} \label{P#1}}
\newcommand{\eP}{\end{Proposition}}
\newcommand{\bLemma}[1]{
\begin{Lemma} \label{L#1} }
\newcommand{\eL}{\end{Lemma}}
\newcommand{\bCorollary}[1]{
\begin{Corollary} \label{C#1} }
\newcommand{\eC}{\end{Corollary}}
\newcommand{\bRemark}[1]{
\begin{Remark} \label{R#1} }
\newcommand{\eR}{\end{Remark}}
\newcommand{\bDefinition}[1]{
\begin{Definition} \label{D#1} }
\newcommand{\eD}{\end{Definition}}
\newcommand{\vrB}{\vr_B}
\newcommand{\Ds}{\mathbb{D}_x}
\newcommand{\vuB}{\vc{u}_B}
\newcommand{\bfphi}{\boldsymbol{\varphi}}
\newcommand{\bFormula}[1]{
\begin{equation} \label{#1}}
\newcommand{\eF}{\end{equation}}
\newcommand{\Ov}[1]{\overline{#1}}
\newcommand{\DC}{C^\infty_c}
\newcommand{\aleq}{\stackrel{<}{\sim}}
\newcommand{\ageq}{\stackrel{>}{\sim}}
\newcommand{\vr}{\varrho}
\newcommand{\tvr}{\tilde \vr}
\newcommand{\tvu}{{\tilde \vu}}
\newcommand{\vu}{\vc{u}}
\newcommand{\vm}{\vc{m}}
\newcommand{\vc}[1]{{\bf #1}}
\newcommand{\Div}{{\rm div}_x}
\newcommand{\Grad}{\nabla_x}
\newcommand{\dx}{\,{\rm d} {x}}
\newcommand{\dt}{\,{\rm d} t }
\newcommand{\intO}[1]{\int_{\Omega} #1 \ \dx}
\newcommand{\D}{{\rm d}}
\newcommand{\ep}{\varepsilon}
\def\softd{{\leavevmode\setbox1=\hbox{d}%
          \hbox to 1.05\wd1{d\kern-0.4ex{\char039}\hss}}}
\definecolor{Cgrey}{rgb}{0.85,0.85,0.85}
\definecolor{Cblue}{rgb}{0.50,0.85,0.85}
\definecolor{Cred}{rgb}{1,0,0}
\definecolor{fancy}{rgb}{0.10,0.85,0.10}
\newcommand\Cbox[2]{%
    \newbox\contentbox%
    \newbox\bkgdbox%
    \setbox\contentbox\hbox to \hsize{%
        \vtop{
            \kern\columnsep
            \hbox to \hsize{%
                \kern\columnsep%
                \advance\hsize by -2\columnsep%
                \setlength{\textwidth}{\hsize}%
                \vbox{
                    \parskip=\baselineskip
                    \parindent=0bp
                    #2
                }%
                \kern\columnsep%
            }%
            \kern\columnsep%
        }%
    }%
    \setbox\bkgdbox\vbox{
        \color{#1}
        \hrule width  \wd\contentbox %
               height \ht\contentbox %
               depth  \dp\contentbox
        \color{black}
    }%
    \wd\bkgdbox=0bp%
    \vbox{\hbox to \hsize{\box\bkgdbox\box\contentbox}}%
    \vskip\baselineskip%
}
\date{}
\begin{document}

%%%%%%%%%%%%%%%%%%%%%%%%%%%%%%%%

\title{Weak--strong uniqueness property for models of compressible viscous fluids
near vacuum}

\author{Eduard Feireisl
\thanks{The work of E.F. was supported by the
Czech Sciences Foundation (GA\v CR), Grant Agreement
21--02411S.}
\and Anton\' \i n Novotn\' y { \thanks{The work of A.N. was partially supported by the Eduard \v Cech visiting program at the Mathematical Institute of the Academy of Sciences of the Czech Republic. }
}
}

%\date{\today}

\maketitle

\centerline{Institute of Mathematics of the Academy of Sciences of the Czech Republic;}
\centerline{\v Zitn\' a 25, CZ-115 67 Praha 1, Czech Republic}

\centerline{feireisl@math.cas.cz}

\centerline{and}

\centerline{IMATH, EA 2134, Universit\'e de Toulon,}
\centerline{BP 20132, 83957 La Garde, France}
\centerline{novotny@univ-tln.fr}

\begin{abstract}

We extend the weak--strong uniqueness principle to general models of compressible viscous fluids near/on the vacuum. In particular, the physically relevant case of 
positive density with polynomial decay at infinity is considered.

\end{abstract}

{\bf Keywords:} Compressible Navier--Stokes system, weak strong uniqueness, vacuum

%{\bf MSC:}
\bigskip

%\tableofcontents

\section{Introduction}
\label{I}

The motion of a compressible viscous fluid in the barotropic regime is described by the time evolution of the mass density
$\vr = \vr(t,x)$ and the velocity field $\vu = \vu(t,x)$, $t > 0$, $x \in \Omega \subset R^d$, $d=1,2,3$, satisfying the equation of continuity
\begin{equation} \label{I1}
\partial_t \vr + \Div (\vr \vu) = 0,
\end{equation}
and the momentum balance
\begin{equation} \label{I2}
\partial_t (\vr \vu) + \Div (\vr \vu \otimes \vu) + \Grad p(\vr) = \Div \mathbb{S} + \vr \vc{f}.
\end{equation}
Here, $p = p(\vr)$ is the pressure, $\vc{f}$ the external driving force, and $\mathbb{S}$ the viscous stress. To close the system,
we suppose that $\mathbb{S}$ is related to the symmetric part of the velocity gradient
\[
\Ds \vu \equiv \frac{1}{2} \left( \Grad \vu + \Grad^t \vu \right)
\]
through a general rheological law
\begin{equation} \label{I3}
F(\Ds \vu) + F^* (\mathbb{S}) = \mathbb{S} : \Grad \vu \ \Leftrightarrow \
\mathbb{S} \in \partial F (\Ds \vu) \ \Leftrightarrow \
\Ds \vu \in \partial F^* (\mathbb{S}),
\end{equation}
where
\[
F: R^{d \times d}_{\rm sym} \to [0, \infty] \ \mbox{is a convex l.s.c. function.}
\]
The best known example is the isentropic Navier--Stokes system, where
\begin{equation} \label{I4}
p(\vr) = a \vr^{\gamma},\ a > 0, \ \gamma > 1,\
F = \frac{\mu}{2} \left| \Grad \vu + \Grad^t \vu - \frac{2}{d} \Div \vu \mathbb{I} \right|^2 +
\frac{\lambda}{2} |\Div \vu|^2, \mu > 0, \ \lambda \geq 0.
\end{equation}
The ``implicit'' rheological law \eqref{I3} covers the class of power law fluids as well as other non-Newtonian fluids, 
see e.g. Bul\' \i \v cek et al. \cite{BuGwMaSG}.

Our goal is to study stability of strong solutions of the problem in the regime when the fluid density either vanishes or approaches
asymptotically the vacuum state. To begin, we point out that the model has been derived for non--dilute fluids out of vacuum. In particular, as strong solutions satisfy the equation of continuity
\[
\partial_t \vr + \vu \cdot \Grad \vu + \vr \Div \vu = 0,
\]
the density will always remain positive unless the vacuum state is artificially imposed through the initial or boundary data. However, allowing
$\vr \approx 0$ is still physically relevant at least in certain asymptotic regimes:
\begin{itemize}
\item Solutions of \eqref{I1}--\eqref{I3} with $\vc{f} = \Grad G$, $G = G(x)$ approach an equilibrium state $\vr = \tvr$,
$\vu = 0$ as $t \to \infty$, see \cite{FeiKwoNov}, where
\begin{equation} \label{I5}
\Grad p(\tvr) = \tvr \Grad G.
\end{equation}
It is a simple observation that $\tvr$ may vanish on a non--void subset of the physical domain, in particular if the total mass of the fluid is small enough.

\item In models of gaseous stars in astrophysics, the physical domain $\Omega$ is exterior to a rigid object, while
\begin{equation} \label{I5a}
\vr \to 0,\ \vu \to 0 \ \mbox{as}\ |x| \to \infty.
\end{equation}
Thus the density is close to zero at least in the far field. Note that, on the one hand, the isentropic Navier--Stokes system \eqref{I1}, \eqref{I2}, \eqref{I4} is globally well posed in this regime if the total energy is small enough, see Huang, Li, and Xin
\cite{HuLiXi},
and also Fang, Zhu, and Guo \cite{FanZhuGuo} for a similar result in the non-Newtonian case and $d=1$. On the other hand, Rozanova \cite{Roza} and Xin \cite{XIN} showed that fast decay of solutions as $|x| \to \infty$ is not compatible with global existence.
More recently, Merle et al. \cite{MeRaRoSz} obtained blow--up results in a similar regime for radially symmetric solutions with
certain profiles and certain values of the adiabatic exponent $\gamma$.

\item Last but not least, the problem with vanishing initial density is mathematically challenging and has been considered
in a large number of recent studies, see Gong et al. \cite{GoLiLiZh}, Li and Xin \cite{LiXin}, Liang \cite{Liang},
to name only a few.

\end{itemize}

Our aim is to clarify in which way the strong solutions obtained in the above references can coexist/coincide with the weak solutions
in the sense of P.-L.Lions \cite{LI4} or even more general dissipative solutions introduced in \cite{AbbFeiNov}. More specifically, we establish the weak--strong strong uniqueness principle: A weak solution of the problem \eqref{I1}--\eqref{I3} coincides with the
strong one with the same initial/boundary data as long as the strong solution exists. As shown in \cite[Theorem 6.3]{AbbFeiNov}, the
weak--strong uniqueness principle holds for the problem \eqref{I1}--\eqref{I3} in the class of strong solutions away from vacuum, and
for $\Omega \subset R^d$ a bounded domain with general in/out flow boundary conditions:
\begin{equation} \label{I6}
\vu|_{\partial \Omega} = \vuB,\ \vr|_{\Gamma_{\rm in}} = \vrB,\
\Gamma_{\rm in} \equiv \left\{ x \in \partial \Omega \ \Big|\ \vuB \cdot \vc{n} < 0 \right\},
\end{equation}
where $\vc{n}$ is the outer normal vector to $\partial \Omega$. We show that this result can be extended to the class of strong solutions
near/with the vacuum as long as:
\begin{enumerate}
\item
The potential $F$ is uniformly strictly convex in $R^{d \times d}_{{\rm sym},0}$.
\item
The pressure $p$ is strictly convex in $[0, \infty)_{\rm loc}$.
\item
The density component $\tvr$ of the strong solution satisfies
\[
\Grad P'(\tvr) \in L^2(0,T; L^q(\Omega)),\ q= q(d),
\]
where $P$ is the pressure potential,
\[
P'(\vr) \vr - P(\vr) = p(\vr).
\]
\item
If $\Omega$ is unbouded, the initial density must decay sufficiently fast as $|x| \to \infty$.

\end{enumerate}

Conditions 1, 2 are constitutive. Condition 1 holds in particular for the Newtonian stress in the Navier--Stokes system.
Condition 2 is satisfied for $p = a \vr^\gamma$ as long as $1 < \gamma \leq 2$, which is physically relevant in the isentropic regime.
Condition 3 is associated to the flow. If the velocity field $\tvu$ is smooth enough, the Sobolev regularity of
$P'(\tvr)$ is propagated along streamlines. It is therefore enough to impose 3 on the data only.
We point out that 
\[
\Grad P'(\tvr) \approx \tvr^{\gamma - 2} \Grad \tvr
\]
becomes singular near the vacuum if $\gamma < 2$. Condition 4 is relevant if the physical domain is unbounded. Here again, the decay may be inherited from the initial/boundary data if the velocity field is sufficiently regular.

Our method is based on a general relative energy inequality obtained in \cite{AbbFeiNov} that must be adapted to the vanishing density regime. The most delicate issue is the behavior of the fluid velocity on or near the vacuum region. As a matter of fact, the concept of velocity
in the absence of matter is meaningless, however, the velocity can be recovered as a solution of an elliptic equation in the whole physical space. This paradox can be seen as a consequence of the infinite speed of propagation due to the viscous stress with constant
viscosity coefficients. {By the same token, the decay of the velocity for $|x| \to \infty$ is not expected to be faster than that of the Dirichlet kernel at least in the linearly viscous case. This fact makes the analysis on unbounded domains rather delicate and requires certain decay properties of the initial density profile. Seen from this perspective, the hypothesis $\vu (t, \cdot) \in W^{1,2}$ assumed sometimes in the literature is not very realistic.}

The paper is organized as follows. We begin by recalling the basic concepts of strong as well as weak (dissipative) solution in Section
\ref{sw}. In Section \ref{re}, we introduce the relative energy functional. The main results are stated and proved in Section \ref{ws}.
We distinguish three cases: Bounded domain, general unbounded domain, and the whole space $R^3$ with 
compactly supported density or rapidly decaying density.

\section{Strong and weak solutions}
\label{sw}

We recall the definition of strong and weak (dissipative) solutions to the problem \eqref{I1}--\eqref{I3}, with the boundary
conditions \eqref{I6}, and with the far field conditions \eqref{I5a} if the domain $\Omega$ is unbounded. For the sake of simplicity, 
we assume that the boundary data $\vuB$, $\vrB$ are independent of $t$.
First, extend the
function $\vuB$ to be smooth in $\Ov{\Omega}$ and
\begin{equation} \label{sw1}
\vu_B(x) = 0 \ \mbox{for}\ |x| > R
\end{equation}
if $\Omega$ is unbounded. To avoid technical difficulties, we suppose that the outer normal is defined whenever $\vuB \cdot \vc{n}
\ne 0$. Moreover, we consider the pressure in the isentropic form
\begin{equation} \label{sw1a}
p(\vr) = a \vr^\gamma, \ a > 0, \ \gamma > 1.
\end{equation}
Finally, we impose certain growth restrictions on the dissipation potential $F$,
\begin{equation} \label{ws1a}
0 \leq F(\mathbb{D}) \aleq |\mathbb{D}|^2,
\end{equation}
\begin{equation} \label{ws1}
F( \mathbb{D} + \mathbb{Q} ) - F(\mathbb{D}) - \mathbb{S} : \mathbb{Q} \ageq
\left|\mathbb{Q} - \beta {\rm tr}[\mathbb{Q}] \mathbb{I} \right|^2,\ \beta = \frac{1}{d} \ \mbox{if}\ d = 2,3, \ \beta = 0
\ \mbox{if}\ d = 1,
\end{equation}
for any $\mathbb{D}, \mathbb{Q} \in R^{d \times d}_{\rm sym}$, $\mathbb{S} \in \partial F (\mathbb{D})$. Note that both \eqref{ws1a}
and \eqref{ws1} are compatible with the Newtonian potential \eqref{I4}. The condition \eqref{ws1a} can be relaxed at the expense
of several technical difficulties in the proofs. The coercivity assumption \eqref{ws1}, reflecting non-degeneracy of the viscous stress,
is essential.

\subsection{Strong solutions}
\label{Ssw1}

The goal is to identify the largest possible class of strong solutions. Inspired by the existence result by Cho, Choe, Kim \cite{ChoChoeKim} we consider the velocity $\tvu$ in the class
\begin{equation} \label{sw2}
(\tvu - \vuB) \in C([0,T]; D^{1,2}_0 (\Omega; R^d) \cap L^2(0,T; D^{2, q}(\Omega; R^d)),
\partial_t \tvu \in L^2(0,T; L^{q} (\Omega; R^d))
\end{equation}
for some $q > d$.
Here, $D^{k,q}$ denotes the space of locally integrable functions $v$ with $D^k_x v \in L^q(\Omega)$, $D^{1,2}_0$ is the closure
of $\DC(\Omega)$ in $D^{1,2}(\Omega)$. If $\Omega$ is bounded, $D^{k,p}$ coincide with the standard Sobolev spaces $W^{k,p}$.
Unbounded domains will be considered only for $d=3$.
Note that the class \eqref{sw2} includes the case $q = 6$, $d=3$ considered in \cite{ChoChoeKim}. In particular,
by interpolation and the standard embedding $D^{1, q} \hookrightarrow C$, $q > d$,
\[
\tvu \in C([0,T]; D^{1,q} \cap D^{1,2}_0 (\Omega; R^d)) \hookrightarrow BC([0,T] \times \Ov{\Omega}; R^d),
\]
{ where  $BC(Q)$ is the Banach space of bounded continuous functions on $Q$.}
Similarly, we deduce from \eqref{sw2}
\begin{equation} \label{sw3}
\int_0^T  \| \tvu \|_{W^{1,\infty}(\Omega; R^d)}^2 \dt \aleq 1
\end{equation}
if either $\Omega$ is bounded or $d=3$.

In particular, given the velocity field $\tvu$, the initial state and the boundary data
\[
\tvr(0, \cdot) = \vr_0 \in L^1_{\rm loc}(\Omega),\ \tvr|_{\Gamma_{\rm in}} = \vrB \in L^1(\partial \Omega),
\]
the density $\tvr$ is uniquely determined by the equation of continuity \eqref{I1} and can be computed explicitly via the method of
characteristics. More specifically, supposing $\Omega$ has a compact Lipschitz boundary, we may extend $\tvu$ outside $\Omega$
to remain in the class
\[
\tvu \in L^2(0,T; D^{2, q}(R^d; R^d)) \cap BC([0,T] \times R^d; R^d).
\]
Then we define the characteristic curves
\[
\frac{\D }{\dt} \vc{X}(t,x) = \tvu (t, \vc{X}(t,x)),\ \vc{X}(0, x) = x.
\]
For any $t > 0$, $x \in \Omega$, there is a unique characteristic satisfying
\[
\vc{X}(t, x_0) = x.
\]
Let $\tau = \inf \left\{ s \in [0,t] \ \Big| \vc{X}(z, x_0) \in \Omega \ \mbox{for all} \ z \in (s,t) \right\}$.
If $\tau = 0$, we set
\begin{equation} \label{sw4}
\tvr(t,x) = \vr_0(x_0) \exp \left( - \int_0^t \Div \tvu (s, \vc{X}(s,x_0) ) \D s \right).
\end{equation}
If $\tau > 0$, then necessarily
\[
x_\tau = \vc{X}(\tau, x_0) \in \Ov{\Gamma}_{\rm in},
\]
and we set
\begin{equation} \label{sw5}
\tvr(t,x) = \vrB(x_\tau) \exp \left( - \int_\tau^t \Div \tvu (s, \vc{X}(s,x_\tau) ) \D s \right).
\end{equation}
{To avoid jumps in the resulting density, the above procedure requires an obvious compatibility condition} 
\[
\vr_0 \in BC(\Ov{\Omega}), \ \vr_0|_{\Gamma_{\rm in}} = \vrB.
\]
{Still the construction via characteristic lines may give rise to ambiguous results as soon as part of the boundary is tangent to the characteristic 
curves. To avoid this problem, several non--degeneracy conditions on $\Gamma_{\rm in}$ are usually imposed in the literature
to eliminate ingoing characteristics emanating from $\partial \Gamma_{\rm in}$, cf. e.g. Valli and Zajaczkowski \cite{VAZA}.}
The density $\tvr \in BC([0,T] \times \Ov{\Omega})$ associated to $\tvu$ is then well defined through formulae
\eqref{sw4}, \eqref{sw5}.
As a direct consequence,
we obtain the following standard result concerning propagation of Sobolev regularity. 
{To avoid the afore mentioned technical problems and also because
the available regularity of
$\partial_t \tvu$ is rather limited, we restrict ourselves to the case $\Gamma_{\rm in} = \emptyset$.}

\begin{Lemma} \label{Lsw1}
Let $\Omega \subset R^d$ be a domain with compact (possibly empty) Lipschitz boundary. Let a vector field $\tvu$ be given in the class
\[
\tvu \in L^2(0,T; D^{2, q}(\Omega; R^d)) \cap BC([0,T] \times \Ov{\Omega}; R^d),\ d < q \leq \infty
\]
and such that $\tvu \cdot \vc{n}|_{\partial \Omega} \geq 0$. Suppose that
\[
\vr_0 \in BC(\Ov{\Omega}),\
\vr_0 \geq 0,  \ \Grad (\vr_0^{\gamma-1}) \in L^q (\Omega; R^d) \ \mbox{for some}\ \ 1 < \gamma \leq 2.
\]

Then the unique solution $\tvr$ of the equation of continuity \eqref{I1} determined by \eqref{sw4} satisfies
\[
\| \Grad (\tvr^{\gamma - 1}) (t, \cdot) \|_{L^q(\Omega; R^d) } \leq c (T, \vr_0, \tvu) \ \mbox{for any}\ t \in [0,T].
\]

\end{Lemma}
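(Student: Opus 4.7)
The plan is to set $r := \tvr^{\gamma-1}$, derive the transport-type PDE
\[
\partial_t r + \tvu \cdot \Grad r + (\gamma-1)\, r\, \Div \tvu = 0
\]
from the continuity equation \eqref{I1} via the chain rule, and then control $\Grad r$ in $L^q$ by a direct energy estimate. An $L^\infty$ bound on $r$ is immediate from the characteristic formula \eqref{sw4}:
\[
\|\tvr(t,\cdot)\|_{L^\infty(\Omega)} \leq \|\vr_0\|_{L^\infty} \exp\Big(\int_0^t \|\Div\tvu(s,\cdot)\|_{L^\infty(\Omega)} \, \dd s\Big),
\]
and the right-hand side is finite because $D^{2,q} \hookrightarrow W^{1,\infty}$ for $q > d$ and $\tvu \in L^2(0,T;D^{2,q})$.

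Next, I would apply $\partial_{x_i}$ to the equation for $r$, test by $|\Grad r|^{q-2}\partial_i r$, sum over $i$, and integrate over $\Omega$. Integration by parts on the convective term produces a boundary contribution proportional to $\int_{\partial\Omega}(\tvu\cdot\vc{n})\,|\Grad r|^q\,\dd S_x \geq 0$, which is discarded thanks to the crucial no-inflow assumption $\tvu\cdot\vc{n} \geq 0$. Estimating the remaining terms by H\"older's inequality and dividing through by $\|\Grad r\|_{L^q}^{q-1}$ yields
\[
\Dt \|\Grad r(t,\cdot)\|_{L^q} \leq C\,\|\Grad\tvu(t,\cdot)\|_{L^\infty}\, \|\Grad r(t,\cdot)\|_{L^q} + C\,\|r(t,\cdot)\|_{L^\infty}\,\|\Grad^2\tvu(t,\cdot)\|_{L^q}.
\]
Since both $\|\Grad\tvu\|_{L^\infty}$ and $\|\Grad^2\tvu\|_{L^q}$ belong to $L^2(0,T)$ by the regularity class \eqref{sw2} combined with \eqref{sw3}, and the $L^\infty$-norm of $r$ is already controlled, Gronwall's inequality will close the estimate and deliver the desired bound depending only on $T$, $\vr_0$, and $\tvu$.

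The main obstacle is the rigorous justification of the chain rule at the vacuum set: when $1 < \gamma < 2$ the map $s \mapsto s^{\gamma-1}$ is merely H\"older continuous at $s = 0$, so $r$ need not be differentiable across $\{\tvr = 0\}$. I would handle this by regularization, working with $r_\delta := (\tvr+\delta)^{\gamma-1}$, deriving the corresponding PDE, proving the analogous estimate with constants independent of $\delta > 0$, and passing to the limit $\delta \to 0^+$ via dominated convergence. Alternatively, one can differentiate the explicit representation \eqref{sw4} directly in the initial variable $x_0$ and use the Jacobian identity $\det D_{x_0}\vc{X}(t,x_0) = \exp\big(\int_0^t \Div\tvu(s,\vc{X}(s,x_0))\, \dd s\big)$ to change variables back to $\Omega$, expressing $\|\Grad r(t,\cdot)\|_{L^q}$ through $\|\Grad\vr_0^{\gamma-1}\|_{L^q}$ plus lower-order contributions controlled by $\int_0^t \|\Grad\Div\tvu\|_{L^q}\,\dd s$; the hypothesis $\tvu\cdot\vc{n}\geq 0$ ensures that no characteristics originate from $\partial\Omega$, so only the first case of \eqref{sw4} is relevant.
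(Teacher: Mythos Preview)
Your proposal is correct, and your ``alternative'' is precisely the paper's argument: raise the characteristic formula \eqref{sw4} to the power $\gamma-1$,
\[
\tvr^{\gamma-1}(t,x)=\vr_0^{\gamma-1}\big(\vc{X}^{-1}(t,x)\big)\,\exp\Big((1-\gamma)\int_0^t\Div\tvu\big(s,\vc{X}(s,\vc{X}^{-1}(t,x))\big)\,\dd s\Big),
\]
and differentiate directly in $x$. Since the hypothesis is exactly $\vr_0^{\gamma-1}\in W^{1,q}$, the inverse flow $\vc{X}^{-1}(t,\cdot)$ is a $W^{1,\infty}$ diffeomorphism with bounds governed by $\int_0^T\|\Grad\tvu\|_{L^\infty}\,\dd t$, and the exponential factor lies in $W^{1,q}\cap L^\infty$ thanks to $\nabla^2_x\tvu\in L^2(0,T;L^q)$, the product/chain rule applies with no vacuum issue whatsoever---the nonlinear power is taken \emph{before} differentiation, so the H\"older singularity of $s\mapsto s^{\gamma-1}$ at $s=0$ never appears. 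Your main route via the $L^q$ energy estimate for the transport equation $\partial_t r+\tvu\cdot\Grad r+(\gamma-1)r\,\Div\tvu=0$ is equally valid and is the standard device when no explicit solution formula is available; it buys generality at the price of the $r_\delta=(\tvr+\delta)^{\gamma-1}$ regularization you correctly flag (note $|\Grad r_\delta(0,\cdot)|\leq|\Grad\vr_0^{\gamma-1}|$ since $(\vr_0+\delta)^{\gamma-2}\leq\vr_0^{\gamma-2}$ for $\gamma\leq 2$, so the data stay uniformly bounded). The paper's formula-based argument is shorter precisely because it bypasses this step entirely.
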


\begin{proof}

Extending $\tvu$ and $\vr_0$ outside $\Omega$ as the case may be, we may assume $\Omega = R^d$. As $\Gamma_{\rm in} = \emptyset$, the solution $\tvr$ is given by formula \eqref{sw4}. In particular,
\[
\tvr (t,x) = \vr_0 (\vc{X}^{-1}(t,x) )\exp \left( - \int_0^t \Div \tvu (s, \vc{X}(s,\vc{X}^{-1}(t,x) ) ) \D s \right),
\]
\[
\tvr^{\gamma - 1} (t,x) = \vr^{\gamma - 1}_0 (\vc{X}^{-1}(t,x) )\exp \left( (1 - \gamma) \int_0^t \Div \tvu (s, \vc{X}(s,\vc{X}^{-1}(t,x) ) ) \D s \right).
\]
Thus the desired result follows by direct  differentiation of the above formula.

\end{proof}

If the exponent $\gamma$ remains in the range $1 < \gamma \leq 2$, differentiablity of $\tvr^{\gamma -1}$ implies differentiablity
of $\tvr$ and of $\tvr^\gamma$. In particular, the equation of continuity \eqref{I1} can be interpreted in the strong sense. Similarly, the left hand side of the momentum equation can be written in the form
\[
\tvr \partial_t \tvu + \tvr \tvu \cdot \Grad \tvu + \tvr \Grad \tvr^{\gamma - 1}
\]
Thus for $\tvu$ belonging to the class \eqref{sw2}, the system \eqref{I1}--\eqref{I3} can be written as
\begin{equation} \label{sw7}
\begin{split}
\partial_t \tvr + \tvu \cdot \Grad \tvr + \tvr \Div \tvu &= 0,\\
\tvr \partial_t \tvu + \tvr \tvu \cdot \Grad \tvu + \tvr \Grad \tvr^{\gamma - 1} &= \Div \widetilde{\mathbb{S}} + \tvr \vc{f},
\end{split}
\end{equation}
with
\begin{equation} \label{sw8}
\widetilde{\mathbb{S}} \in \partial F(\Ds \tvu) \ \mbox{for a.a.} \ t \in (0,T).
\end{equation}

\subsection{Dissipative (weak) solutions}

The dissipative solutions to the system \eqref{I1}--\eqref{I3} were introduced in \cite{AbbFeiNov}. We denote
$\mathcal{M}^+(\Ov{\Omega}; R^{d \times d}_{\rm sym})$ the set of positively semi--definite finite tensorial measures on $\Ov{\Omega}$.
Specifically, $\mathfrak{R} \in \mathcal{M}^+(\Ov{\Omega}; R^{d \times d}_{\rm sym})$ if $\mathfrak{R}$ is a tensor valued finite measure satisfying
\[
\mathfrak{R}: (\xi \otimes \xi) \geq 0 \ \mbox{for any}\ \xi \in R^d.
\]

\begin{Definition}[{\bf Dissipative solution}] \label{Dsw1}

The functions $(\vr, \vu)$ represent \emph{dissipative solution} of the problem \eqref{I1}--\eqref{I3}, with the boundary conditions \eqref{I6}, the far field conditions \eqref{I5a}, and the initial conditions
\[
\vr(0, \cdot) = \vr_0, \ (\vr \vu)(0, \cdot) = \vc{m}_0,
\]
if the following holds:
\begin{itemize}
\item {\bf Integrability.} 
\[
\begin{split}
\vr \geq 0,\ \vr &\in C_{\rm weak}([0,T]; L^\gamma(\Omega)) \cap
L^\gamma((0,T); L^\gamma (\partial \Omega, |\vuB \cdot \vc{n} | \D \sigma)),\\
(\vu - \vuB) &\in L^2(0,T; D^{1,2}_0(\Omega; R^d)),\ \vr \vu \in C_{\rm weak}([0,T]; L^{\frac{2 \gamma}{\gamma + 1}}(\Omega; R^d)),\\
\mathbb{S} &\in L^2 (0,T; L^2(\Omega; R^{d \times d}_{\rm sym})).
\end{split}
\]
\item {\bf Equation of continuity.}
The integral identity
\begin{equation} \label{sw9}
\begin{split}
\left[ \intO{ \vr \varphi } \right]_{t = 0}^{t = \tau} &+
\int_0^\tau \int_{\partial \Omega} \varphi \vr [\vu_B \cdot \vc{n}]^+ \ \D \ S_x
+
\int_0^\tau \int_{\partial \Omega} \varphi \vr_B [\vu_B \cdot \vc{n}]^- \ \D \ S_x\\ &=
\int_0^\tau \intO{ \Big[ \vr \partial_t \varphi + \vr \vu \cdot \Grad \varphi \Big] } \dt ,\ \vr(0, \cdot) = \vr_0
\end{split}
\end{equation}
holds
for any $0 \leq \tau \leq T$, and any test function $\varphi \in C^1_c([0,T] \times \Ov{\Omega})$.

\item {\bf Momentum equation.}
There exists
\[
\mathfrak{R} \in L^\infty_{\rm weak(*)}(0,T; \mathcal{M}^+ (\Ov{\Omega}; R^{d \times d}_{\rm sym}))
\]
such that
\begin{equation} \label{sw10}
\begin{split}
\left[ \intO{ \vr \vu \cdot \bfphi } \right]_{t=0}^{t = \tau} &=
\int_0^\tau \intO{ \Big[ \vr \vu \cdot \partial_t \bfphi + \vr \vu \otimes \vu : \Grad \bfphi
+ p(\vr) \Div \bfphi - \mathbb{S} : \Grad \bfphi  + \vr \vc{f} \cdot \bfphi \Big] }\\
&+ \int_0^\tau \int_{{\Omega}} \Grad \bfphi : \D \ \mathfrak{R}(t) \ \dt,\ \vr \vu(0, \cdot) = \vm_0
\end{split}
\end{equation}
holds for any $0 \leq \tau \leq T$ and any test function $\bfphi \in C^1_c([0,T] \times \Ov{\Omega}; R^d)$, $\bfphi|_{\partial \Omega} = 0$.

\item {\bf Energy inequality.}
There exists
\[
\mathfrak{E} \in L^\infty_{\rm weak(*)}(0,T; \mathcal{M}^+ (\Ov{\Omega}))
\]
such that
\begin{equation} \label{sw11}
\begin{split}
&\left[ \intO{\left[ \frac{1}{2} \vr |\vu - \vuB|^2 + P(\vr) \right] } \right]_{t = 0}^{ t = \tau} +
\int_0^\tau \intO{ \Big[ F(\Ds \vu) + F^* (\mathbb{S}) \Big] } \dt\\
&+\int_0^\tau \int_{\partial \Omega} P(\vr)  [\vu_B \cdot \vc{n}]^+ \ \D S_x \dt +
\int_0^\tau \int_{\partial \Omega} P(\vr_B)  [\vu_B \cdot \vc{n}]^- \ \D S_x \dt
+ \int_{\Ov{\Omega}}  \D \ \mathfrak{E} (\tau) \\	
\leq
&-
\int_0^\tau \intO{ \left[ \vr \vu \otimes \vu + p(\vr) \mathbb{I} \right]  :  \Grad \vu_B } \dt + \int_0^\tau { \frac{1}{2} \intO{ {\vr} \vu\cdot \Grad |\vu_B|^2  } }
\dt\\& + \int_0^\tau \intO{ \Big[ \mathbb{S} : \Grad \vu_B + \vr \vc{f} \cdot { (\vu-\vu_B)} \Big] } \dt\\
&-\int_0^\tau \int_{\Ov{\Omega}} \Grad \vu_B : \D \ \mathfrak{R}(t) \dt,\ P(\vr) \equiv \frac{a}{\gamma - 1} \vr^\gamma
\end{split}
\end{equation}
for a.a. $0 \leq \tau \leq T$.

\item {\bf Defect compatibility.}

\begin{equation} \label{sw12}
\underline{d} \mathfrak{E} \leq {\rm tr}[\mathfrak{R}] \leq \Ov{d} \mathfrak{E},\
\ \mbox{for certain constants}\ 0 < \underline{d} \leq \Ov{d}.
\end{equation}

\end{itemize}

\end{Definition}
\begin{Remark} \label{Rsw1}

The hypothesis
\[
\mathbb{S} \in L^2(0,T; L^2(\Omega; R^{d \times d}_{\rm sym}))
\]
is pertinent to the class of the dissipative potentials $F$ with at most quadratic growth \eqref{ws1a}.

\end{Remark}
\section{Relative energy}
\label{re}

The relative energy reads:
\[
E \left( \vr, \vu \ \Big|\ \tvr, \tvu \right) =
\frac{1}{2} \vr |\vu - \tvu|^2 + P(\vr) - P'(\tvr) (\vr - \tvr) - P(\tvr).
\]
The relative energy inequality associated to the problem \eqref{I1}--\eqref{I3}, \eqref{I6} was derived in
\cite[Section 5, formula (5.5)]{AbbFeiNov}. Its generalization to the case of unbounded domain is straightforward { (cf. also \cite{FeJiNo})}:
\begin{equation} \label{re1}
\begin{split}
&\left[ \intO{E\left( \vr, \vu \ \Big|\ \tvr, \tvu \right) } \right]_{t = 0}^{ t = \tau} +
\int_0^\tau \intO{ \Big[ F(\Ds \vu) + F^* (\mathbb{S}) \Big] } \dt - \int_0^\tau \intO{ \mathbb{S} : \Grad \tvu } \dt \\
&{ +\int_0^\tau \int_{\partial \Omega} \left[ P(\vr) - P'(\tvr) (\vr - \tvr) - P(\tvr)  \right]  [\vu_B \cdot \vc{n}]^+ \ \D S_x \dt}\\
&{ +\int_0^\tau \int_{\partial \Omega} \left[ P(\vr_B) - P'(\tvr) (\vr_B - \tvr) - P(\tvr)  \right]  [\vu_B \cdot \vc{n}]^- \ \D S_x \dt}
+ \int_{\Ov{\Omega}} 1 \D \ \mathfrak{E} (\tau)\\
\leq &- \int_0^\tau \intO{ \vr (\tvu - \vu) \cdot (\tvu - \vu) \cdot \Grad \tvu } \dt\\
&-
\int_0^\tau \intO{ \Big[ p(\vr) - p'(\tvr) (\vr - \tvr) - p(\tvr) \Big] \Div \tvu } \dt
\\
&+ \int_0^\tau \intO{ \vr (\tvu - \vu) \cdot \Big[ \partial_t \tvu  +  \tvu \cdot \Grad \tvu
  + \Grad P'(\tvr) \Big] } \dt { -\int_0^\tau \intO{ \vr (\tvu - \vu) \cdot \vc{f} } \dt}\\ &+ \int_0^\tau \intO{
	p'(\tvr)\left( 1 - \frac{\vr}{\tvr} \right)  \Big[ \partial_t \tvr   +  \Div (\tvr \tvu)
  \Big] } \dt 
	\\
&- \int_0^\tau \int_{\Ov{\Omega}} \Grad \tvu : \D \ \mathfrak{R}(t) \ \dt
\ \mbox{for a.a.}\ \tau \in (0,T),
\end{split}
\end{equation}
for a.a. $0 < \tau < T$,
and any pair of ``test functions''
\begin{equation} \label{re2}
\begin{split}
\tvu &\in C^1_c([0,T] \times \Ov{\Omega}; R^d), \tvu|_{\partial \Omega} = \vuB,\\
\tvr &\in C^1([0,T] \times \Ov{\Omega}), \ \tvr > 0, (\tvr - \ep) \in C_c([0,T] \times \Ov{\Omega})
\ \mbox{for some}\ \ep > 0.
\end{split}
\end{equation}

Our goal is to extend validity of \eqref{re1} to the class of test functions containing $(\tvr + \ep, \tvu)$, where
$(\tvr, \tvu)$ is a strong solution. Using the standard regularization by a convolution kernel in time, we first observe that
the class of test functions can be extended to
\begin{equation} \label{re3}
\begin{split}
\tvu &\in C_c([0,T] \times \Ov{\Omega}; R^d), \tvu|_{\partial \Omega} = \vuB,\
\partial_t \tvu \in L^2(0,T; C_c(\Ov{\Omega})),\
\Grad \tvu \in L^2(0,T; C_c(\Ov{\Omega}; R^{d \times d})) \\
(\tvr - \ep) &\in C_c([0,T] \times \Ov{\Omega}), \ \tvr > 0, \
\partial_t \tvr \in L^{\infty}_{\rm weak-(*)}(0,T; C_c(\Ov{\Omega})),\
\Grad \tvr \in L^{\infty}_{\rm weak-(*)}(0,T; C_c(\Ov{\Omega}; R^d))
\end{split}
\end{equation}
for some $\ep > 0$. 

The class \eqref{re3} covers all interesting cases if $\Omega$ is bounded. If $\Omega$ is an exterior domain or $\Omega = R^d$, integrability for $|x| \to \infty$ is relevant. In this case, we consider $d=3$ only, where we consider the class:
\begin{equation} \label{re4}
\begin{split}
\tvu &\in BC([0,T] \times \Ov{\Omega}; R^3), \tvu|_{\partial \Omega} = \vuB,\
\Grad \tvu \in L^2(0,T; BC \cap L^2 (\Ov{\Omega}; R^{d \times d})),\\
\partial_t \tvu &\in L^2(0,T; BC\cap L^6(\Ov{\Omega} ; R^3 )),
 \\
\tvr &\in BC([0,T] \times \Ov{\Omega}), \ \inf \tvr > 0, \
\Grad \tvr \in L^{\infty}_{\rm weak-(*)}(0,T; BC \cap L^6 (\Ov{\Omega}; R^3))\\
\partial_t \tvr &\in L^{\infty}_{\rm weak-(*)}(0,T; BC \cap L^6 (\Ov{\Omega})).
\end{split}
\end{equation}
{The extension to this class can be justified by the standard cut-off procedure as long as all integrals
in \eqref{re1} are finite.
Extensions to other classes of test functions can be obtained in a similar manner as the case may be.} 

\section{Weak strong uniqueness}
\label{ws}

We are ready to establish the main results.
We distinguish the cases of $\Omega$ bounded, unbounded, or $\Omega = R^3$. For the sake of simplicity, we suppose here and hereafter that 
\[
\vc{f} = 0.
\]

\subsection{Weak--strong uniqueness--bounded domain}
\label{wsbd}

We start with our main result concerning bounded domains, where inflow is absent.

\begin{Theorem} [{\bf Weak--strong uniqueness, bounded domain, no inflow}] \label{wsT1}

Suppose that $\Omega \subset R^d$, $d=1,2,3$ is a bounded Lipschitz domain. Let the boundary velocity $\vuB$ be a twice continuously differentiable function satisfying
\[
\vuB \cdot \vc{n} \geq 0 \ \mbox{on}\ \partial \Omega.
\]
Let $(\tvr, \tvu)$ be a strong solution of the problem \eqref{I1}, \eqref{I2}, \eqref{I6} in $(0,T) \times \Omega$ in the sense specified in Section \ref{Ssw1} (see \eqref{sw7},\eqref{sw8}) belonging to the class
\begin{equation} \label{ws2}
\begin{split}
\tvu &\in C([0,T] \times \Ov{\Omega}; R^d),\ \tvr \in C([0,T] \times {\Omega}), \ \tvr \geq 0, \\
\nabla^2_x \tvu &\in L^2(0,T; L^q(\Omega; R^{d \times d \times d})),\
\partial_t \tvu \in L^2(0,T; L^q(\Omega; R^d) ),
\end{split}
\end{equation}
where
\begin{equation} \label{ws3}
1 < \gamma \leq 2,\ q \geq \frac{2 \gamma}{\gamma - 1}.
\end{equation}
Let $(\vr, \vu)$ be a weak solution of the same problem in the sense of Definition \ref{Dsw1} such that
\[
\vr(0, \cdot) = \tvr(0, \cdot) = \vr_0, \ (\vr \vu)(0, \cdot) = (\tvr \tvu)(0, \cdot) = \vm_0, 
\vu|_{\partial \Omega} = \tvu|_{\partial \Omega} = \vuB,
\]
where
\begin{equation} \label{ws4}
(\vr_0)^{\gamma - 1} \in W^{1,q}(\Omega; R^d).
\end{equation}

Then
\[
\vr = \tvr, \ \vu = \tvu \ \mbox{in}\ (0,T) \times \Omega.
\]

\end{Theorem}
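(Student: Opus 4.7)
The plan is to apply the relative energy inequality \eqref{re1} with the strong solution $(\tvr, \tvu)$ as test pair. Since the admissible class \eqref{re3} requires strict positivity of the test density and smoothness beyond what Lemma \ref{Lsw1} and hypothesis \eqref{ws4} guarantee for $\tvr$, I first replace $\tvr$ by $\tvr_\ep := \tvr + \ep$, extend the admissible class by the cut-off / mollification procedure indicated after \eqref{re4}, and let $\ep \to 0$ at the end. The crucial uniform bound that makes everything tick is
\[
|\Grad P'(\tvr_\ep)| = a\gamma(\tvr + \ep)^{\gamma-2}|\Grad\tvr| \leq \frac{a\gamma}{\gamma-1}|\Grad\tvr^{\gamma-1}|,
\]
valid precisely because $\gamma \leq 2$ forces $(\tvr+\ep)^{\gamma-2} \leq \tvr^{\gamma-2}$; its $L^\infty_t L^q_x$-norm is then finite by Lemma \ref{Lsw1}. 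Write $\mathcal{E}_\ep(\tau) := \int_{\Omega} E(\vr, \vu \,|\, \tvr_\ep, \tvu)(\tau, \cdot)\,\dx$.

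Next, the strong PDEs \eqref{sw7}--\eqref{sw8} allow simplification of the RHS of \eqref{re1}. The renormalized-continuity contribution collapses to $\ep \int_0^\tau \intO{p'(\tvr_\ep)(1 - \vr/\tvr_\ep)\Div \tvu}\dt$, which vanishes as $\ep \to 0$ thanks to the bound $\ep(\tvr+\ep)^{\gamma-2} \leq \ep^{\gamma-1}$. For the convective pressure piece we use $\tvr_\ep \Grad P'(\tvr_\ep) = \Grad p(\tvr_\ep)$ to split
\[
\vr(\tvu - \vu)\cdot \Grad P'(\tvr_\ep) = (\vr - \tvr_\ep)(\tvu - \vu)\cdot \Grad P'(\tvr_\ep) + (\tvu - \vu)\cdot \Grad p(\tvr_\ep),
\]
integrate the second piece by parts (the boundary term vanishes since $\vu = \tvu = \vuB$), and absorb it together with the relative pressure term of \eqref{re1} into a quantity bounded by $\|\Div \tvu\|_{L^\infty}\mathcal{E}_\ep$ via strict convexity of $p$. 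The remaining pieces $\int (\vr - \tvr_\ep)(\tvu - \vu)\cdot \Grad P'(\tvr_\ep)$ and $\int \vr (\tvu - \vu)\cdot[\partial_t \tvu + \tvu\cdot \Grad \tvu]$ are then controlled by the Hölder pairing
\[
\|\vr(\tvu - \vu)\|_{L^{2\gamma/(\gamma+1)}} \leq \|\vr\|_{L^\gamma}^{1/2}\,\|\sqrt{\vr}\,(\tvu - \vu)\|_{L^2} \leq C\sqrt{\mathcal{E}_\ep}
\]
against $\|\Grad P'(\tvr_\ep)\|_{L^q}$ and $\|\partial_t \tvu + \tvu\cdot \Grad \tvu\|_{L^q}$ respectively, both in $L^2(0,T)$ by \eqref{ws2}--\eqref{ws3} (using $q \geq 2\gamma/(\gamma-1)$) and Lemma \ref{Lsw1}.

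The viscous dissipation on the LHS of \eqref{re1} is processed through \eqref{ws1} applied with $\mathbb{D} = \Ds \tvu$, $\mathbb{Q} = \Ds(\vu - \tvu)$, combined with Fenchel--Young $F^*(\mathbb{S}) - \mathbb{S}:\Ds \tvu \geq -F(\Ds \tvu)$, which yields
\[
F(\Ds \vu) + F^*(\mathbb{S}) - \mathbb{S}:\Grad \tvu \;\geq\; \widetilde{\mathbb{S}}:\Ds(\vu - \tvu) + c\,|\Ds(\vu - \tvu) - \beta\,\mathrm{tr}[\Ds(\vu - \tvu)]\mathbb{I}|^2.
\]
The $\widetilde{\mathbb{S}}$ cross term is moved to the RHS and absorbed by the coercive term (via Korn) modulo an $L^1(0,T)$-coefficient times $\mathcal{E}_\ep$; the defect contribution $\int \Grad \tvu : \D \mathfrak{R}$ is controlled by $\|\Grad \tvu\|_{L^\infty}\,\Ov{d} \int \D \mathfrak{E}$ through \eqref{sw12}. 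Collecting all bounds gives
\[
\mathcal{E}_\ep(\tau) + \int_{\Ov{\Omega}} \D\mathfrak{E}(\tau) \;\leq\; \int_0^\tau \phi(s)\,\mathcal{E}_\ep(s)\, ds + o_\ep(1),\qquad \phi \in L^1(0,T),\ \mathcal{E}_\ep(0) \to 0,
\]
so Gronwall forces $\mathcal{E}_\ep(\tau) \to 0$. Pointwise convergence and nonnegativity of the integrands then yield $\int_\Omega E(\vr, \vu \,|\, \tvr, \tvu)(\tau, \cdot)\,\dx = 0$, giving $\vr = \tvr$ and $\sqrt{\vr}(\vu - \tvu) = 0$; since $\vu - \tvu \in L^2(0,T; D^{1,2}_0(\Omega))$ with trivial Dirichlet trace, this propagates to $\vu = \tvu$ a.e.

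The main obstacle is the pressure-convective piece $\int (\vr - \tvr_\ep)(\tvu - \vu)\cdot \Grad P'(\tvr_\ep)$ near the vacuum set $\{\tvr \approx 0\}$, whose uniform-in-$\ep$ closure requires simultaneously the Sobolev propagation of $\Grad \tvr^{\gamma-1}$ from Lemma \ref{Lsw1} (hypothesis \eqref{ws4}) and the monotonicity $(\tvr + \ep)^{\gamma-2} \leq \tvr^{\gamma-2}$ valid only for $\gamma \leq 2$ (hypothesis \eqref{ws3}); together these two conditions pin down the exponent ranges of the theorem.
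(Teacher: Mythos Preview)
Your argument has a genuine gap at the Gronwall closure. The H\"older pairing you write,
\[
\left| \intO{ \vr(\tvu-\vu)\cdot\big[\partial_t\tvu+\tvu\cdot\Grad\tvu\big]} \right|
\leq \|\vr(\tvu-\vu)\|_{L^{2\gamma/(\gamma+1)}}\,\|\partial_t\tvu+\tvu\cdot\Grad\tvu\|_{L^q}
\leq C\sqrt{\mathcal{E}_\ep}\,g(t),\quad g\in L^2(0,T),
\]
yields a right–hand side of order $\sqrt{\mathcal{E}_\ep}$, not $\mathcal{E}_\ep$. A differential inequality of the type $\mathcal{E}'\leq \phi\sqrt{\mathcal{E}}$ with $\mathcal{E}(0)=0$ does \emph{not} force $\mathcal{E}\equiv 0$ (the ODE $y'=\phi\sqrt{y}$ is non--uniquely solvable at $0$); any attempt to Young--split $\sqrt{\mathcal{E}_\ep}\,g\leq \delta\mathcal{E}_\ep+C(\delta)g^2$ leaves the nonvanishing residual $\int g^2$. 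The same problem afflicts your treatment of the $\widetilde{\mathbb S}$ cross term: ``absorbed by the coercive term modulo an $L^1$--coefficient times $\mathcal{E}_\ep$'' cannot be right, because Young's inequality applied to $\int\widetilde{\mathbb S}:\Ds(\vu-\tvu)$ leaves $\|\widetilde{\mathbb S}\|_{L^2}^2$, which is neither zero nor comparable to $\mathcal{E}_\ep$.

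What you are missing is precisely the mechanism the paper uses to obtain a \emph{linear} bound in $\mathcal{E}$. The paper adds $\int(\vu-\tvu)\cdot\Div\widetilde{\mathbb S}$ to both sides and invokes the strong momentum equation $\tvr\big[\partial_t\tvu+\tvu\cdot\Grad\tvu+\Grad P'(\tvr)\big]=\Div\widetilde{\mathbb S}$; on the right this converts the factor $\vr$ into $(\vr-\tvr)$, and on the left the $\widetilde{\mathbb S}$ contribution combines \emph{exactly} (no Young loss) with $F(\Ds\vu)+F^*(\mathbb S)-\mathbb S:\Grad\tvu$ to produce $\int|\Grad(\vu-\tvu)|^2$ via \eqref{ws1} and Korn. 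The remaining integral $\int(\vr-\tvr)(\tvu-\vu)\cdot\vc{b}$ is then split according to $\vr\gtrless\Ov\vr$. The decisive estimate is on $\{\vr\leq\Ov\vr\}$: there strict convexity of $P$ on the compact interval $[0,\Ov\vr]$ gives $|\vr-\tvr|^2\aleq E$, so after Young
\[
\int 1_{\vr\leq\Ov\vr}(\vr-\tvr)|\vu-\tvu||\vc{b}|
\leq \delta\|\vu-\tvu\|_{L^{2\gamma}}^2
+ c(\delta)\|\vc{b}\|_{L^q}^2\int_\Omega E\,\dx,
\]
with the $\delta$--term absorbed into the dissipation by Poincar\'e--Sobolev. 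This is where the linear–in–$\mathcal{E}$ control, and hence a genuine Gronwall argument, comes from. Your splitting of the pressure gradient alone (keeping $\vr$ in front of $\partial_t\tvu+\tvu\cdot\Grad\tvu$) cannot reproduce this, and neither can your proposed absorption of $\widetilde{\mathbb S}$.
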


\begin{Remark}\label{wsR1}

{In the Newtonian case, the local in time existence of strong solutions with vacuum in the case $\vu_B=0$ (no in/outflow) on smooth bounded domains, say $\Omega\in C^3$, with the pressure law (\ref{sw1a}) was proved
in Cho, Kim \cite[Theorem 3]{ChoKim}. The solutions belong to a regularity class included in \eqref{ws1} if $q = 6$.
Local existence for general non-Newtonian viscous stress was recently established by Kalousek, M\' acha, and Ne\v casov\' a 
\cite{KaMaNe} on periodic spatial domains.}

\end{Remark}

\begin{proof}

The strategy is to apply the relative energy inequality \eqref{re1} to $(\tvr, \tvu)$. This cannot be done in a direct manner as the latter requires $\inf \tvr > 0$. Thus we start with the choice $(\tvr + \ep, \tvu)$, $\ep > 0$, obtaining
\[
\begin{split}
&\intO{E\left( \vr, \vu \ \Big|\ \tvr + \ep , \tvu \right)(\tau, \cdot) } + \int_{\Ov{\Omega}} 1 \D \ \mathfrak{E} (\tau) \\
&+\int_0^\tau \intO{ \Big[ F(\Ds \vu) + F^* (\mathbb{S}) \Big] } \dt - \int_0^\tau \intO{ \mathbb{S} : \Grad \tvu } \dt
\\
&\leq \intO{ P(\vr_0) + \ep P'(\vr_0) - P(\vr_0 + \ep) }\\
&- \int_0^\tau \intO{ \vr (\tvu - \vu) \cdot (\tvu - \vu) \cdot \Grad \tvu } \dt\\
&-
\int_0^\tau \intO{ \Big[ p(\vr) - p'(\tvr + \ep) (\vr - \tvr - \ep) - p(\tvr + \ep) \Big] \Div \tvu } \dt
\\
&+ \int_0^\tau \intO{ \vr (\tvu - \vu) \cdot \Big[ \partial_t \tvu  +  \tvu \cdot \Grad \tvu
  + \Grad P'(\tvr + \ep) \Big] } \dt\\ &+ \ep \int_0^\tau \intO{
	p'(\tvr + \ep)\left( 1 - \frac{\vr}{\tvr + \ep} \right)  \Div \tvu
   } \dt
- \int_0^\tau \int_{\Ov{\Omega}} \Grad \tvu : \D \ \mathfrak{R}(t) \ \dt
\ \mbox{for a.a.}\ \tau \in (0,T),
\end{split}
\]
Moreover, as $\tvu$ belongs to the class \eqref{ws2} and $q \geq 4$,
we get $\| \Grad \tvu \|_{L^\infty(\Omega; R^{d \times d})} \in L^2(0,T)$, and
the above inequality reduces to
\begin{equation} \label{ws6}
\begin{split}
&\intO{E\left( \vr, \vu \ \Big|\ \tvr + \ep , \tvu \right)(\tau, \cdot) } + \int_{\Ov{\Omega}} 1 \D \ \mathfrak{E} (\tau) \\
&+\int_0^\tau \intO{ \Big[ F(\Ds \vu) + F^* (\mathbb{S}) \Big] } \dt - \int_0^\tau \intO{ \mathbb{S} : \Grad \tvu } \dt \\
&\leq \intO{ P(\vr_0) + \ep P'(\vr_0) - P(\vr_0 + \ep) } + \int_0^\tau \chi(t) \left[ \intO{E\left( \vr, \vu \ \Big|\ \tvr + \ep , \tvu \right)(t, \cdot) } + \int_{\Ov{\Omega}} 1 \D \ \mathfrak{E} (t) \right]
\dt  \\
 &+ \int_0^\tau \intO{ \vr (\tvu - \vu) \cdot \Big[ \partial_t \tvu  +  \tvu \cdot \Grad \tvu
  + \Grad P' (\tvr + \ep) \Big] } \dt\\ &+ \ep \int_0^\tau \intO{
	p'(\tvr + \ep)\left( 1 - \frac{\vr}{\tvr + \ep} \right) \Div \tvu } \dt, \ \mbox{with}\ \chi \in L^2(0,T).
\end{split}
\end{equation}

Next, we let $\ep \to 0$ in \eqref{ws6}. Obviously,
\[
\intO{E\left( \vr, \vu \ \Big|\ \tvr + \ep , \tvu \right)(\tau, \cdot) } \to
\intO{E\left( \vr, \vu \ \Big|\ \tvr , \tvu \right)(\tau, \cdot) }
 \ \mbox{as}\ \ep \to 0
\ \mbox{uniformly in}\ \tau \in (0,T),
\]
\[
\intO{ P(\vr_0) + \ep P'(\vr_0) - P(\vr_0 + \ep) } \to 0 \ \mbox{as}\ \ep \to 0,
\]
and
\[
\ep \int_0^\tau \intO{ p'(\tvr + \ep) \Div \tvu } \to 0 \ \mbox{as}\ \ep \to 0.
\]
In addition,
\[
\ep \vr \frac{p'(\tvr + \ep) }{\tvr +\ep} \aleq \ep \vr (\tvr + \ep)^{\gamma - 2} \leq
\ep^{\gamma - 1} \vr \to 0 \ \mbox{in}\ L^\gamma(\Omega) \ \mbox{as}\ \ep \to 0.
\]
Finally, by virtue of Lemma \ref{Lsw1} and hypotheses \eqref{ws2}, \eqref{ws4}, we have
\begin{equation} \label{ws7bis}
\Grad (\tvr)^{\gamma - 1} \in L^\infty(0,T; L^q(\Omega; R^d)).
\end{equation}
Seeing that
\[
\vr \vu \in L^\infty(0,T; L^{\frac{2 \gamma}{\gamma + 1}}(\Omega; R^3))
\]
we may use hypothesis \eqref{ws3} to conclude
\[
\int_0^\tau \intO{ \vr (\tvu - \vu) \cdot
\Grad P' (\tvr + \ep) } \dt \to
\int_0^\tau \intO{ \vr (\tvu - \vu) \cdot
\Grad P' (\tvr) } \dt \ \mbox{as}\ \ep \to 0.
\]
Performing the limit $\ep \to 0$ in \eqref{ws2} we may infer that
\begin{equation} \label{ws7}
\begin{split}
&\intO{E\left( \vr, \vu \ \Big|\ \tvr , \tvu \right)(\tau, \cdot) } + \int_{\Ov{\Omega}} 1 \D \ \mathfrak{E} (\tau) \\
&+\int_0^\tau \intO{ \Big[ F(\Ds \vu) + F^* (\mathbb{S}) \Big] } \dt - \int_0^\tau \intO{ \mathbb{S} : \Grad \tvu } \dt \\
&\leq \int_0^\tau \chi(t) \left[ \intO{E\left( \vr, \vu \ \Big|\ \tvr , \tvu \right)(t, \cdot) } + \int_{\Ov{\Omega}} 1 \D \ \mathfrak{E} (t) \right]
\dt  \\
 &+ \int_0^\tau \intO{ \vr (\tvu - \vu) \cdot \Big[ \partial_t \tvu  +  \tvu \cdot \Grad \tvu
  + \Grad P' (\tvr) \Big] } \dt.
\end{split}
\end{equation}

Now, as $(\tvr, \tvu)$ is a strong solution, we have
\begin{equation} \label{ws8}
\tvr \partial_t \tvu  +  \tvr \tvu \cdot \Grad \tvu
  + \tvr \Grad P' (\tvr) = \Div \widetilde{\mathbb{S}}.
\end{equation}
Consequently, we may add the integral
\[
\int_0^\tau \intO{ (\vu - \tvu) \cdot \Div \widetilde{\mathbb{S} }} \dt =
\int_0^\tau \intO{ \Ds (\tvu - \vu) : \widetilde{\mathbb{S} }} \dt
\]
to both sides of the inequality \eqref{ws7}. Regrouping terms on the left--hand side we get
\[
\begin{split}
\int_0^\tau & \intO{ \left[ F(\Ds \vu) + F^*(\mathbb{S}) - \mathbb{S} : \Ds \tvu +
\widetilde{\mathbb{S}} : (\Ds \tvu - \Ds \vu ) \right] } \dt\\
&= \int_0^\tau  \intO{ \left[ F(\Ds \vu) - F(\Ds \tvu) -
\widetilde{\mathbb{S}} : (\Ds \vu - \Ds \tvu ) \right] },
\end{split}
\]
where we have used Fenchel--Young inequality
\[
F(\Ds \tvu) + F^*(\mathbb{S}) \geq  \mathbb{S} : \Ds \tvu.
\]
As $\widetilde{\mathbb{S}} \in \partial F(\Ds \tvu)$ we can apply hypothesis \eqref{ws1}
for $\mathbb{D} = \Ds \tvu$, $\mathbb{S} = \widetilde{\mathbb{S}}$, $\mathbb{Q} = \Ds \vu - \Ds \tvu$
to obtain
\[
\begin{split}
\int_0^\tau &\intO{ \left| (\Ds \vu - \Ds \tvu) - \frac{1}{d} {\rm tr}[(\Ds \vu - \Ds \tvu)] \mathbb{I} \right|^2 }
\\ &\aleq
\int_0^\tau  \intO{ \left[ F(\Ds \vu) + F^*(\mathbb{S}) - \mathbb{S} : \Ds \tvu +
\widetilde{\mathbb{S}} : (\Ds \tvu - \Ds \vu ) \right] } \dt.
\end{split}
\]
Finally, using the traceless version of Korn's inequality, we may rewrite \eqref{ws7} in the form
\begin{equation} \label{ws9}
\begin{split}
\intO{E\left( \vr, \vu \ \Big|\ \tvr , \tvu \right)(\tau, \cdot) } &+ \int_{\Ov{\Omega}} 1 \D \ \mathfrak{E} (\tau) +
\int_0^\tau \intO{ | \Grad \vu - \Grad \tvu |^2 } \dt \\
&\aleq \int_0^\tau \chi(t) \left[ \intO{E\left( \vr, \vu \ \Big|\ \tvr , \tvu \right)(t, \cdot) } + \int_{\Ov{\Omega}} 1 \D \ \mathfrak{E} (t) \right]
\dt  \\
 &+ \int_0^\tau \intO{ (\vr - \tvr) (\tvu - \vu) \cdot \Big[ \partial_t \tvu  +  \tvu \cdot \Grad \tvu
  + \Grad P' (\tvr) \Big] } \dt.
\end{split}
\end{equation}

Thus it remains to control the last integral in \eqref{ws9}. Let $\Ov{\vr}$ be a positive constant
chosen so that
$\tvr \leq \frac{1}{2} \Ov{\vr}$ in $(0,T) \times \Omega$. We consider two complementary cases. Suppose first that $\vr \geq \Ov{\vr}$.
By virtue of hypotheses \eqref{ws2} -- \eqref{ws4}, we get
\begin{equation} \label{ws10}
\begin{split}
\int_{\Omega} & 1_{\vr \geq \Ov{\vr}} (\vr - \tvr) (\tvu - \vu) \cdot \Big[ \partial_t \tvu  +  \tvu \cdot \Grad \tvu
  + \Grad P' (\tvr) \Big] \dx\\
&\leq \left\| 1_{\vr \geq \Ov{\vr} } (\vr - \tvr)^{1/2} \right\|_{L^{2 \gamma}(\Omega)}
\left\| 1_{\vr \geq \Ov\vr} (\vr - \tvr)^{1/2} (\vu - \tvu) \right\|_{L^2(\Omega; R^d)}
\left\| \partial_t \tvu  +  \tvu \cdot \Grad \tvu
  + \Grad P' (\tvr) \right\|_{L^q(\Omega; R^d)} \\
&\aleq \left\| \partial_t \tvu  +  \tvu \cdot \Grad \tvu
  + \Grad P' (\tvr) \right\|_{L^q(\Omega; R^d)} \intO{ E \left( \vr, \vu \Big| \tvr , \tvu \right) }.	
\end{split}
\end{equation}

If $0 \leq \vr \leq \Ov{\vr}$, then
\begin{equation} \label{ws11}
\begin{split}
\int_{\Omega} &1_{\vr \leq \Ov{\vr}}(\vr - \tvr) (\tvu - \vu) \cdot \Big[ \partial_t \tvu  +  \tvu \cdot \Grad \tvu
  + \Grad P' (\tvr) \Big] \dx\\
&\leq \left\| 1_{\vr \leq \Ov{\vr} } (\vr - \tvr) \right\|_{L^{2}(\Omega)}
\left\| \vu - \tvu \right\|_{L^{2 \gamma} (\Omega; R^d)}
\left\| \partial_t \tvu  +  \tvu \cdot \Grad \tvu
  + \Grad P' (\tvr) \right\|_{L^q(\Omega; R^d)} \\
&\leq \delta \| \vu - \tvu \|^2_{L^{2 \gamma}(\Omega; R^d)}
+
c(\delta) \left\| 1_{\vr \leq \Ov{\vr} } (\vr - \tvr) \right\|_{L^{2}(\Omega)}^2 \left\| \partial_t \tvu  +  \tvu \cdot \Grad \tvu
  + \Grad P' (\tvr) \right\|_{L^q(\Omega; R^d)}^2  .	
\end{split}
\end{equation}
for any $\delta > 0$.
As $\gamma \leq 2$, the standard Poincar\' e--Sobolev inequality yields
\begin{equation} \label{ws12}
\delta \| \vu - \tvu \|^2_{L^{2 \gamma}(\Omega; R^d)} \leq \frac{1}{2} \intO{ |\Grad \vu - \Grad \tvu|^2 }
\end{equation}
for a suitably small $\delta > 0$. By the same token $P$ is strictly convex on the compact interval $[0, \Ov{\vr}]$; whence
\begin{equation} \label{ws13}
\left\| 1_{\vr \geq \Ov{\vr} } (\vr - \tvr) \right\|_{L^{2}(\Omega)}^2 \aleq \intO{
E \left( \vr, \vu \Big| \tvr, \tvu \right) }.
\end{equation}

In view of hypothesis \eqref{ws2}, and \eqref{ws7bis}, we have
\[
\left\| \partial_t \tvu  +  \tvu \cdot \Grad \tvu
  + \Grad P' (\tvr) \right\|_{L^q(\Omega; R^d)} \in L^2(0,T).
\]
Thus
plugging \eqref{ws10}--\eqref{ws13} in \eqref{ws9} we may use the standard Gronwall argument to conclude
\[
\intO{ E \left( \vr, \vu \Big| \tvr, \tvu \right)(\tau, \cdot) } = 0,
\ \tau \in (0,T) ,\ \int_0^T \intO{ |\Grad \vu - \Grad \tvu |^2 } \dt = 0.
\]

\end{proof}

The extension of the above result to more general viscous potentials still satisfying the coercivity condition \eqref{ws1} is straightforward, see \cite{AbbFeiNov}. The major drawback of Theorem \ref{wsT1} is the absence of inflow. 
If $\Gamma_{\rm in} \ne \emptyset$, a slight modification of the above arguments yields a positive result provided the boundary density 
$\vrB$ is bounded below away from vacuum. 
 
\begin{Theorem} [{\bf Weak--strong uniqueness, bounded domain, int/out flow}] \label{wsT1bis} 
Suppose that $\Omega \subset R^d$, $d=1,2,3$ is a bounded Lipschitz domain. Let the boundary conditions be determined by 
$\vrB \in C^1_c(R^d)$, $\vuB \in C^2_c (R^d; R^d)$, where 
\begin{equation} \label{novac}
\vrB(x) \geq \underline{\vr} > 0 \ \mbox{for any}\ x \in \Gamma_{\rm in}.
\end{equation} 
 Let $(\tvr, \tvu)$ be a strong solution of the problem \eqref{I1}, \eqref{I2}, \eqref{I6} in $(0,T) \times \Omega$ in the sense specified in Section \ref{Ssw1} belonging to the class
\begin{equation} \label{ws2bis}
\begin{split}
\tvu &\in C([0,T] \times \Ov{\Omega}; R^d),\ \tvr \in C([0,T]; W^{1,q}(\Omega)), \ \tvr \geq 0, \\
\nabla^2_x \tvu &\in L^2(0,T; L^q(\Omega; R^{d \times d \times d})),\
\partial_t \tvu \in L^2(0,T; L^q(\Omega; R^d) ),
\end{split}
\end{equation}
where
\[
1 < \gamma \leq 2,\ q \geq \frac{2 \gamma}{\gamma - 1}.
\]
Let $(\vr, \vu)$ be a weak solution of the same problem in the sense of Definition \ref{Dsw1} such that
\[
\begin{split}
\vr(0, \cdot) &= \tvr(0, \cdot) = \vr_0, \ (\vr \vu)(0, \cdot) = (\tvr \tvu)(0, \cdot) = \vm_0,\\ 
\vu|_{\partial \Omega} &= \tvu|_{\partial \Omega} = \vuB,\ 
(\vr \vu) \cdot \vc{n}|_{\Gamma_{\rm in}} = 
(\tvr \tvu) \cdot \vc{n}|_{\Gamma{\rm in}} = \vrB \vuB \cdot \vc{n},
\end{split}
\]
where
\begin{equation} \label{ws4bis}
(\vr_0)^{\gamma - 1} \in W^{1,q}(\Omega; R^d).
\end{equation}

Then
\[
\vr = \tvr, \ \vu = \tvu \ \mbox{in}\ (0,T) \times \Omega.
\]

\end{Theorem}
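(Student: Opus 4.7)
The plan is to follow the strategy of Theorem~\ref{wsT1} with the essential modification that the relative energy inequality \eqref{re1} now retains two nontrivial boundary contributions on $\partial \Omega$ which must be controlled. I would begin by applying \eqref{re1} with the regularized strong solution $(\tvr + \ep, \tvu)$, $\ep > 0$. Convexity of $P$ at $\tvr + \ep > 0$ renders the outflow integrand $P(\vr) - P'(\tvr+\ep)(\vr-\tvr-\ep) - P(\tvr+\ep)$ pointwise non-negative, so the corresponding boundary term is retained on the left-hand side and subsequently discarded. The inflow term is the genuinely new feature; since the strong solution satisfies $\tvr|_{\Gamma_{\rm in}} = \vrB$, the inflow integrand reduces on $\Gamma_{\rm in}$ to $P(\vrB) + \ep P'(\vrB + \ep) - P(\vrB + \ep)$, which by the lower bound \eqref{novac} and smoothness of $P$ on $[\underline{\vr}, \infty)$ is of size $O(\ep^2)$ uniformly in $x \in \Gamma_{\rm in}$, and therefore vanishes as $\ep \to 0$.

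The bulk terms are handled as in the proof of Theorem~\ref{wsT1}. Hypothesis \eqref{ws2bis} provides $\tvr \in C([0,T]; W^{1,q}(\Omega))$ directly, and combined with \eqref{ws4bis} and the explicit characteristic formulae \eqref{sw4}--\eqref{sw5} one obtains $\Grad P'(\tvr) \in L^\infty(0,T; L^q(\Omega; R^d))$: on trajectories that reach $x$ from the interior at time $t=0$ the bound on $\Grad \tvr^{\gamma-1}$ is inherited from $\vr_0^{\gamma-1} \in W^{1,q}$ exactly as in Lemma~\ref{Lsw1}, whereas on trajectories entering through $\Gamma_{\rm in}$ the inflow density $\vrB \geq \underline{\vr}$ propagates, with an exponential factor controlled by $\Div \tvu$, to a strictly positive lower bound on $\tvr$, making $\tvr^{\gamma-1}$ a smooth function of $\tvr$ in that region. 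The passage $\ep \to 0$ in all remaining volume integrals then proceeds as in the proof of Theorem~\ref{wsT1}, producing an inequality analogous to \eqref{ws9}.

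From this point onward the argument is identical to Theorem~\ref{wsT1}: I would invoke the strong momentum equation \eqref{ws8} to cancel the Euler-type remainder, apply the coercivity hypothesis \eqref{ws1} together with the Fenchel--Young identity and the traceless Korn inequality to extract the dissipation $\int |\Grad \vu - \Grad \tvu|^2 \dx$, and then estimate the residual $\int (\vr - \tvr)(\tvu - \vu) \cdot [\partial_t \tvu + \tvu \cdot \Grad \tvu + \Grad P'(\tvr)] \dx$ via the dichotomy $\{\vr \geq \Ov{\vr}\}$ versus $\{\vr \leq \Ov{\vr}\}$ for a suitable constant $\Ov{\vr} > \sup_{(0,T) \times \Omega} \tvr$, cf.~\eqref{ws10}--\eqref{ws13}. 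The Poincar\'e--Sobolev embedding, valid since $\gamma \leq 2$, absorbs the velocity contribution into the dissipation, and Gronwall's inequality yields the claim.

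The principal obstacle compared with Theorem~\ref{wsT1} is the treatment of the inflow boundary term in \eqref{re1}; the non-vacuum assumption \eqref{novac} is essential precisely because it keeps $P$, $P'$, $P''$ uniformly bounded in a neighbourhood of $\vrB(\Gamma_{\rm in})$, thereby permitting both the $O(\ep^2)$ expansion described above and the $W^{1,q}$ control of $\tvr^{\gamma-1}$ on characteristics originating from $\Gamma_{\rm in}$.
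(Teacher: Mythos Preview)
Your proposal is correct and follows the same strategy as the paper. The paper organizes the key new ingredient---the bound $\tvr^{\gamma-1}\in L^\infty(0,T;W^{1,q}(\Omega))$---via the level-set decomposition $\tvr^{\gamma-1}=(\delta+[\tvr^{\gamma-1}-\delta]^+)+[\tvr^{\gamma-1}-\delta]^-$ rather than by trajectory type, but the underlying observation is identical to yours: points reached from $\Gamma_{\rm in}$ carry a uniform positive lower bound on $\tvr$ thanks to \eqref{novac}, so the near-vacuum set consists entirely of regular points and is controlled by Lemma~\ref{Lsw1}.
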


\begin{Remark} \label{vacR}

The class \eqref{ws2bis} is slightly smaller than \eqref{ws2} in Theorem \ref{wsT1} but still large enough to accommodate the strong solutions obtained in the Newtonian non--degenerate case by Valli and Zajaczkowski \cite{VAZA}. Note, however, that quite severe restrictions are imposed on $\Gamma_{\rm in}$ in \cite{VAZA}.

\end{Remark}

\begin{proof}

The proof can be done following the same arguments as in Theorem \ref{wsT1} as soon as we observe that 
\[
\tvr^{\gamma - 1} \in L^\infty(0,T; W^{1,q}(\Omega)).
\]
To see this, write 
\[
\tvr^{\gamma - 1} = r_1 + r_2,\ r_1 = \delta + [\tvr^{\gamma - 1} - \delta]^+ , \ r_2 = [\tvr^{\gamma - 1} - \delta]^-  \ \mbox{for a suitable}\ \delta > 0,
\]
where $a^+=\max\{a,0\}$, $a^-=\min\{a,0\}$.
Consequently, it is enough to show 
\[
\Grad r_i \in L^\infty(0,T; L^q(\Omega; R^d)),\ i = 1,2.
\]

As for $r_1$, we have 
\[
\Grad r_1 =
\Grad [ \tvr^{\gamma - 1} - \delta ]^+ = (\gamma - 1) {\rm sgn}^+ [ \tvr^{\gamma - 1} - \delta ] \tvr^{\gamma - 2} 
\Grad \tvr,
\]
so the desired conclusion follows from the hypothesis \eqref{ws2bis} as long as $\delta > 0$.

To obtain a similar estimate for $r_2$, we first extend the velocity field $\tvu$ and the initial density 
$\vr_0$
outside $\Omega$ 
so that they satisfy the hypotheses \eqref{ws2bis}, \eqref{ws4bis} on $R^d$. 
Accordingly, the equation of continuity \eqref{I1}
endowed with the extended velocity field $\tvu$ admits a solution $r$ determined uniquely through formula \eqref{sw4}. By virtue of 
Lemma \ref{Lsw1}, we have 
\begin{equation} \label{nov2}
r^{\gamma - 1} \in L^\infty (0,T; W^{1,q}(R^d)). 
\end{equation}

We say that a point $(t,x) \in [0,T] \times \Omega$ is regular, if the value of $\tvr(t,x)$ is determined by formula \eqref{sw4}, otherwise 
$(t,x)$ is singular. Clearly the backward characteristic curve emanating from a singular point reaches the boundary $\partial \Omega$ at a positive time, and $\tvr(t,x)$ is then given by \eqref{sw5}. Obviously, 
\[
\tvr(t,x) = r(t,x) \ \mbox{whenever}\ (t,x) \ \mbox{is regular}.
\]

As stated in \eqref{novac}, $\vrB$ is bounded below away from zero. Consequently, there exists $\ep = \ep (T, \tvu) > 0$ such that 
\[
\tvr^{\gamma - 1}(t,x) \geq \ep > 0 \ \mbox{whenever}\ (t,x) \ \mbox{is singular}. 
\]
Consequently, for $0 < \delta < \frac{\ep}{2}$ and any fixed $t \in [0,T]$, the set $\{ x \in \Omega \ \Big|\ \tvr^{\gamma - 1} (t,x) \leq \delta \}$ admits 
and open neighbourhood in $\Omega$ that consists of regular points. Consequently, 
\[
\Grad r_2 = \Grad [\tvr^{\gamma - 1} - \delta]^- = (\gamma - 1){\rm sgn}^- [\tvr^{\gamma - 1} - \delta] \Grad 
r^{\gamma -1},
\]
and the desired conclusion follows from \eqref{nov2}.

\end{proof}

\section{Weak--strong uniqueness, unbounded domains}
\label{ud}

The main difficulty when extending the previous results to unbounded domains is the lack of Poincar\' e--Sobolev inequality. For this reason, we consider only the case $d=3$, where
Sobolev's inequality is available, namely
\begin{equation} \label{ud1}
\| v \|_{L^6(\Omega)} \aleq \| \Grad v \|_{L^2(\Omega,R^3)} \ \mbox{for any}\ v \in D^{1,2}_0(\Omega).
\end{equation}
For simplicity, we restrict ourselves to the case $\vuB = 0$.

\subsection{General exterior domain}
\label{ged}

We look for strong solutions $(\tvr, \tvu)$ belonging to the class introduced by Huang et al. \cite{HuLiXi}. In particular,
\begin{equation} \label{ud2}
\begin{split}
\tvu &\in C([0,T]; D^{1,2}_0 \cap D^{3,2}(\Omega; R^3)) \cap L^2(0,T; D^{4,2}(\Omega; R^3)),\\
\partial_t \tvu &\in L^\infty(0,T; D^{1,2}_0 (\Omega, R^3)) \cap L^2(0,T; D^{2,2}(\Omega; R^3)), \\
\tvr,\ p(\tvr) &\in C([0,T]; W^{3,2}(\Omega)).
\end{split}
\end{equation}
We point out that the class is pertinent to the Newtonian case, where the momentum equation can be 
differentiated with respect to time to obtain higher order estimates on $\partial_t \tvu$.

In addition, similarly to Theorem \ref{wsT1}, we impose the condition
\begin{equation} \label{ud3}
(\vr_0)^{\gamma - 1} \in W^{1,6} \cap W^{1, \infty}(\Omega; R^3).
\end{equation}
As shown in Lemma \ref{Lsw1}, this regularity will propagate in time; whence we get
\begin{equation} \label{ud4}
\sup_{t \in (0,T)} \| \Grad (\tvr)^{\gamma - 1}(t, \cdot) \|_{L^6 \cap L^\infty (\Omega; R^3)} \leq c.
\end{equation}

Now, following step by step the proof of Theorem \ref{wsT1} we arrive at the inequality \eqref{ws9}, specifically,
\begin{equation} \label{ud5}
\begin{split}
\int_{\Omega} E\left( \vr, \vu \ \Big|\ \tvr , \tvu \right)(\tau, \cdot) \dx &+ \int_{\Omega} 1 \D \ \mathfrak{E} (\tau) +
\int_0^\tau \int_{\Omega}{ | \Grad \vu - \Grad \tvu |^2 }\dx \dt \\
&\aleq \int_0^\tau \chi(t) \left[ \int_{\Omega}{E\left( \vr, \vu \ \Big|\ \tvr , \tvu \right)(t, \cdot) } \dx + \int_{\Ov{\Omega}} 1 \D \ \mathfrak{E} (t) \right]
\dt  \\
 &+ \int_0^\tau \int_{\Omega}{ (\vr - \tvr) (\tvu - \vu) \cdot \vc{b} }\dx \dt,
\end{split}
\end{equation}
where, by virtue of \eqref{ud2}, \eqref{ud4},
\begin{equation} \label{ud5a}
\vc{b} = \partial_t \tvu + \tvu \cdot \Grad \tvu + \Grad P'(\tvr) \in L^2(0,T; L^6 \cap L^\infty (\Omega; R^3)).
\end{equation}
Similarly to the preceding section, we write
\[
\begin{split}
\int_0^\tau &\int_{\Omega}{ (\vr - \tvr) (\tvu - \vu) \cdot \vc{b} }\dx \dt \\ &=
\int_0^\tau \int_{\Omega}{ 1_{\vr \geq \Ov{\vr}} (\vr - \tvr) (\tvu - \vu) \cdot \vc{b} }\dx \dt
+ \int_0^\tau \int_{\Omega}{ 1_{\vr < \Ov{\vr}} (\vr - \tvr) (\tvu - \vu) \cdot \vc{b} }\dx \dt,
\end{split}
\]
where, exactly as in \eqref{ws10},
\begin{equation} \label{ud6}
\begin{split}
\int_0^\tau &\int_{\Omega}{ 1_{\vr \geq \Ov{\vr}} (\vr - \tvr) (\tvu - \vu) \cdot \vc{b} }\dx \dt \\
&\leq \left\| 1_{\vr \geq \Ov{\vr} } (\vr - \tvr)^{1/2} \right\|_{L^{2}(\Omega)}
\left\| 1_{\vr \geq \Ov\vr} (\vr - \tvr)^{1/2} (\vu - \tvu) \right\|_{L^2(\Omega; R^3)}
\left\| \vc{b} \right\|_{L^\infty(\Omega; R^3)} \\
&\aleq \left\| \vc{b} \right\|_{L^\infty(\Omega; R^3)} \intO{ E \left( \vr, \vu \Big| \tvr , \tvu \right) }.	
\end{split}
\end{equation}

The integral
\begin{equation} \label{ud7}
\int_0^\tau \int_{\Omega}{ 1_{\vr < \Ov{\vr}} (\vr - \tvr) (\tvu - \vu) \cdot \vc{b} }\dx \dt
\end{equation}
is more difficult to handle. Indeed, in order to mimick the arguments leading to \eqref{ws11}, we would need
\[
\vc{b} \ \mbox{bounded in}\ L^2(0,T; L^3(\Omega; R^3)),
\]
which does not follow from \eqref{ud2} if $\Omega$ is an exterior domain and must be imposed as an extra hypothesis.

\begin{Theorem} [{\bf Weak--strong uniqueness, unbounded domains}] \label{wsT3}

Suppose that $\Omega \subset R^3$, $d=1,2,3$ is an exterior domain with compact Lipschitz boundary.
Let $(\tvr, \tvu)$ be a strong solution of the problem \eqref{I1}, \eqref{I2}, \eqref{I6} with $\vuB = 0$ in $(0,T) \times \Omega$ in the sense specified in Section \ref{Ssw1} belonging to the class \eqref{ud2}. In addition, suppose that
\begin{equation} \label{HYP}
\partial_t \tvu \in L^2(0,T; L^3(\Omega; R^3)).
\end{equation}

Let $(\vr, \vu)$ be a weak solution of the same problem in the sense of Definition \ref{Dsw1} such that
\[
\vr(0, \cdot) = \tvr(0, \cdot) = \vr_0, \ (\vr \vu)(0, \cdot) = (\tvr \tvu)(0, \cdot) = \vm_0,
\]
where
\begin{equation} \label{HYP1}
(\vr_0)^{\gamma - 1} \in W^{1,3} \cap W^{1, \infty}(\Omega; R^3).
\end{equation}

Then
\[
\vr = \tvr, \ \vu = \tvu \ \mbox{in}\ (0,T) \times \Omega.
\]

\end{Theorem}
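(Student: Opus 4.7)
The plan is to follow the path laid out in the text preceding the statement: the relative energy inequality \eqref{ud5} has already been derived by repeating verbatim the arguments of Theorem \ref{wsT1}, and the large--density contribution \eqref{ud6} has been estimated by $\|\vc{b}\|_{L^\infty}$ times the relative energy. My task reduces to controlling the small--density integral \eqref{ud7} in a form compatible with Gronwall's lemma. The H\"older triple in the bounded--domain proof used an exponent $q > d$ on $\vc{b}$ paired with Poincar\'e--Sobolev on $\vu - \tvu$; on an exterior domain only the Sobolev inequality \eqref{ud1} is available, so the matching exponent on $\vc{b}$ is forced to be $3$.

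\textbf{Step 1: $L^3$--integrability of $\vc{b}$.} I first verify that
\[
\vc{b} = \partial_t \tvu + \tvu \cdot \Grad \tvu + \Grad P'(\tvr) \in L^2(0,T; L^3(\Omega; R^3)).
\]
The term $\partial_t \tvu$ is handled precisely by the supplementary hypothesis \eqref{HYP}. For $\tvu \cdot \Grad \tvu$, the class \eqref{ud2} combined with the embedding $D^{1,2}_0 \hookrightarrow L^6$ gives $\tvu \in L^\infty(0,T; L^6)$, while $D^{2,2} \hookrightarrow L^6$ applied to first derivatives yields $\Grad \tvu \in L^\infty(0,T; L^6)$; H\"older then furnishes $\tvu \cdot \Grad \tvu \in L^\infty(0,T; L^3)$. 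For the pressure term $\Grad P'(\tvr) = \tfrac{a \gamma}{\gamma - 1}\Grad \tvr^{\gamma - 1}$, Lemma \ref{Lsw1} propagates the initial bound \eqref{HYP1} to $\Grad \tvr^{\gamma - 1} \in L^\infty(0,T; L^3 \cap L^\infty)$.

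\textbf{Step 2: Estimate of \eqref{ud7} and Gronwall closure.} Fix $\Ov\vr > 2 \sup_{[0,T] \times \Omega} \tvr$. H\"older with exponents $(2,6,3)$ yields
\[
\int_{\Omega} \ind_{\vr < \Ov\vr}(\vr - \tvr)(\tvu - \vu)\cdot \vc{b}\,\dx \leq \|\ind_{\vr < \Ov\vr}(\vr - \tvr) \|_{L^2(\Omega)} \| \vu - \tvu \|_{L^6(\Omega; R^3)} \|\vc{b}\|_{L^3(\Omega; R^3)}.
\]
Strict convexity of $P$ on the compact interval $[0, \Ov\vr]$ gives $\ind_{\vr < \Ov\vr}(\vr - \tvr)^2 \aleq E(\vr, \vu | \tvr, \tvu)$ pointwise, and \eqref{ud1} applied to $\vu - \tvu \in D^{1,2}_0$ yields $\|\vu - \tvu\|_{L^6} \aleq \|\Grad(\vu - \tvu)\|_{L^2}$. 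A Young inequality with small $\delta > 0$ then produces
\[
\int_{\Omega} \ind_{\vr < \Ov\vr}(\vr - \tvr)(\tvu - \vu)\cdot \vc{b}\,\dx \leq \delta \|\Grad(\vu - \tvu)\|_{L^2}^2 + c(\delta)\|\vc{b}\|_{L^3}^2 \intO{E(\vr, \vu | \tvr, \tvu)}.
\]
The gradient term is absorbed into the Korn--controlled dissipation on the left of \eqref{ud5}; combined with \eqref{ud6} and the fact that $t \mapsto \|\vc{b}(t,\cdot)\|_{L^3}^2 + \|\vc{b}(t,\cdot)\|_{L^\infty}$ lies in $L^1(0,T)$ by Step 1 and \eqref{ud5a}, the standard Gronwall argument forces $\intO{E(\vr,\vu | \tvr,\tvu)(\tau, \cdot)} = 0$ for a.a.\ $\tau \in (0,T)$, giving $\vr = \tvr$ and $\vu = \tvu$.

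\textbf{Main obstacle.} The delicate point is Step 1, and precisely why the hypothesis \eqref{HYP} cannot be dispensed with. The natural regularity $\partial_t \tvu \in L^\infty(0,T; D^{1,2}_0) \cap L^2(0,T; D^{2,2})$ inherited from \eqref{ud2} produces only $L^6 \cap L^\infty$ integrability in space, but $L^6 \cap L^\infty \not\hookrightarrow L^3$ on an unbounded three--dimensional domain: a function behaving like $|x|^{-1}$ at infinity furnishes a counterexample. This is exactly the Dirichlet--kernel--type slow decay of the viscous velocity flagged in the introduction, and it is the reason the $L^2_t L^3_x$ control on $\partial_t \tvu$ must be imposed as a separate assumption rather than deduced from elliptic regularity.
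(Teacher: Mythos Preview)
Your proposal is correct and follows essentially the same approach as the paper: the text preceding the theorem already reduces matters to inequality \eqref{ud5} together with the large--density estimate \eqref{ud6}, and explicitly identifies the missing ingredient as $\vc{b} \in L^2(0,T; L^3)$ for controlling \eqref{ud7} via the H\"older triple $(2,6,3)$ and the Sobolev inequality \eqref{ud1}. You supply exactly this, treating $\partial_t \tvu$ by \eqref{HYP}, $\tvu \cdot \Grad \tvu$ by the $L^6 \times L^6$ pairing coming from \eqref{ud2}, and $\Grad P'(\tvr)$ by propagating \eqref{HYP1} via Lemma \ref{Lsw1}, then close with Young and Gronwall just as in the bounded case.
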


The result extends easily to general in/out flow boundary conditions exactly as in Theorem \ref{wsT1bis}.

\subsection{Compactly supported density}

In view of the existence class \eqref{ud2} identified in \cite{HuLiXi}, the hypothesis \eqref{HYP} does not seem very realistic, 
in particular in the presence of vacuum. To handle the general case, we restrict ourselves to the initial data with \emph{compactly supported}
initial density $\vr_0$ considered in \cite{HuLiXi} or \cite{XIN}.
Moreover, we restrict ourselves to the class of Newtonian fluids, where 
\[ 
\mathbb{S}(\Grad \tvu) = \mu \left( \Grad \tvu + \Grad^t \tvu - \frac{2}{3} \Div \tvu \mathbb{I} \right) + \lambda \Div \tvu \mathbb{I},\ \mu > 0,\ \lambda \geq 0.
\]

In view of Lemma \ref{Lsw1}, there exists $R > 0$ sufficiently large such that
\begin{equation} \label{ud8}
\tvr(t,x) = 0 \ \mbox{for all}\ t \in [0,T], \ |x| \geq R.
\end{equation}
Going back to the integral \eqref{ud7}, we get
\[
\begin{split}
\int_0^\tau &\int_{\Omega}{ 1_{\vr < \Ov{\vr}} (\vr - \tvr) (\tvu - \vu) \cdot \vc{b} }\dx \dt \\
&=
\int_0^\tau \int_{x \in \Omega, |x| \leq R}{ 1_{\vr < \Ov{\vr}} (\vr - \tvr) (\tvu - \vu) \cdot \vc{b} }\dx \dt +
\int_0^\tau \int_{|x| > R }{ 1_{\vr < \Ov{\vr}} \vr(\tvu - \vu) \cdot \vc{b} }\dx \dt,
\end{split}
\]
where
\[
\int_{x \in \Omega, |x| \leq R}{ 1_{\vr < \Ov{\vr}} (\vr - \tvr) (\tvu - \vu) \cdot \vc{b} }\dx \leq \left\| 1_{\vr \leq \Ov{\vr}} (\vr - \tvr) \right\|_{L^2(\Omega)}
\| \vu - \tvu \|_{L^6 (\Omega; R^3)}  \| \vc{b} \|_{L^3(\Omega \cap \{ |x| \leq R \})}.
\]
Seeing that
\[
\| \vc{b} \|_{L^3(\Omega \cap \{ |x| \leq R; R^3 \})} \leq c(R) \| \vc{b} \|_{L^6(\Omega; R^3)}
\]
the above integral is controlled by \eqref{ud5a}.

Finally,
\begin{equation} \label{ud10a}
\int_{|x| > R }{ 1_{\vr < \Ov{\vr}} \vr(\tvu - \vu) \cdot \vc{b} }\dx \leq \delta \| \vu - \tvu \|^2_{L^6(\Omega; R^3)} +
c(\delta) \| 1_{\vr \leq \Ov{\vr} } \vr \|^2_{L^\gamma(\Omega)} \| \vc{b} \|_{L^q(|x| \geq R) }^2
\end{equation}
for any $\delta > 0$, where
\[
\frac{1}{q} = \frac{5}{6} - \frac{1}{\gamma}.
\]
Suppose that $1 < \gamma \leq 2$. Consequently,
\[
\| 1_{\vr \leq \Ov{\vr} } \vr \|^2_{L^\gamma(\Omega)} \leq c(\Ov{\vr}) \| 1_{\vr \leq \Ov{\vr} } \vr \|^\gamma_{L^\gamma(\Omega)} \aleq
\int_{\Omega} E \left( \vr, \vu \ \Big| \tvr, \tvu \right) \dx.
\]
Thus, to close the estimates, we need to control
\begin{equation} \label{ud10}
\int_0^T
\| \vc{b} \|_{L^q(|x| \geq R)}^2 \dt \leq c(q) \ \mbox{for any}\ q > 3.
\end{equation}
To see \eqref{ud10}, we realize that
\[
\vc{b} = \partial_t \tvu + \tvu \cdot \Grad\tvu \ \mbox{if}\ |x| > R.
\]
If the fluid is Newtonian, we get immediately from \eqref{ws8} that
\[
\mu
\Div \left( \Grad \partial_t \tvu + \Grad^t \partial_t \tvu - \frac{2}{3} \Div \partial_t \tvu \mathbb{I} \right)
+ \lambda \Grad \Div \partial_t \tvu = 0
\ \mbox{for}\ |x| > R,
\]
where, in view of \eqref{ud2}
\[
\partial_t \tvu \in D^{1,2} \cap L^6(|x| > R), \ \partial_t \tvu \in C^\alpha (|x| = R)
\ \mbox{for some}\ \alpha > 0 \ \mbox{and a.a.} \ t \in (0,T).
\]
Using the standard elliptic estimates, we get
\[
|\partial_t \tvu (t,x) | \aleq \frac{1}{|x|} \|\partial_t \tvu (t, \cdot)\|_{C(|x| = R)} \ \mbox{for all} \ |x| \geq R,
\]
which, together with \eqref{ud2}, yields \eqref{ud10}. Thus we are able to control the integral \eqref{ud10a} as soon as 
$1 < \gamma < 2$.

We have shown the following result.

\begin{Theorem} [{\bf Weak--strong uniqueness, compactly supported density}] \label{wsT2}

Suppose that $\Omega \subset R^3$ is a Lipschitz exterior domain, $\vuB = 0$, and $1 < \gamma < 2$. In addition, let $\mathbb{S}$ be Newtonian,
\begin{equation} \label{ud10bis}
\mathbb{S} = \mu \left( \Grad \vu + \Grad^t \vu - \frac{2}{3} \Div \vu \mathbb{I} \right) + \lambda \Div \vu \mathbb{I},\ \mu > 0,\ \lambda \geq 0.
\end{equation}
Let $(\tvr, \tvu)$ be a strong solution of the problem \eqref{I1}, \eqref{I2} in $(0,T) \times \Omega$ in the sense specified in Section \ref{Ssw1} belonging to the class
\eqref{ud2}.
Let $(\vr, \vu)$ be a weak solution of the same problem in the sense of Definition \ref{Dsw1} such that
\[
\vr(0, \cdot) = \tvr(0, \cdot) = \vr_0, \ (\vr \vu)(0, \cdot) = (\tvr \tvu)(0, \cdot) = \vm_0,
\]
where
\[
\vr_0 \in C_c(\Ov{\Omega}),\
(\vr_0)^{\gamma - 1} \in W^{1,6} \cap W^{1, \infty}(\Omega; R^3).
\]

Then
\[
\vr = \tvr, \ \vu = \tvu \ \mbox{in}\ (0,T) \times \Omega.
\]

\end{Theorem}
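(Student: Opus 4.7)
The plan is to mimic the bounded-domain argument of Theorem \ref{wsT1} and to exploit the Newtonian structure together with the compact support of $\tvr$ to obtain sufficient decay of the drift
$\vc{b} = \partial_t \tvu + \tvu \cdot \Grad \tvu + \Grad P'(\tvr)$
at infinity. First, I would insert the regularized pair $(\tvr + \ep, \tvu)$ into the relative energy inequality \eqref{re1}, send $\ep \to 0$ using Lemma \ref{Lsw1} (applicable thanks to \eqref{ud3} and the ensuing bound \eqref{ud4}), and repeat the Fenchel--Young/Korn manipulation of Theorem \ref{wsT1} to arrive at the analogue \eqref{ud5}--\eqref{ud5a} of \eqref{ws9}. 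The Poincaré--Sobolev step is replaced by the global Sobolev inequality \eqref{ud1}.

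Next I would split the remainder $\int_0^\tau \int_{\Omega} (\vr - \tvr)(\tvu - \vu) \cdot \vc{b}\,\dx\,\dt$ according to whether $\vr \geq \Ov{\vr}$ or $\vr < \Ov{\vr}$. The super-$\Ov{\vr}$ part is handled as in \eqref{ud6} using the $L^\infty$ bound on $\vc{b}$ coming from \eqref{ud5a}. For the sub-$\Ov{\vr}$ part I would further split spatially: choose $R$ so large that $\supp \tvr(t,\cdot) \subset B_R$ for all $t \in [0,T]$, which is possible by \eqref{ud8}. On $\{|x| \leq R\}$ the $L^3$ norm of $\vc{b}$ is dominated by its $L^6$ norm, which is bounded via \eqref{ud5a}, so the bounded-domain argument goes through verbatim.

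The heart of the matter is the far-field integral over $\{|x| > R\}$. There $\tvr \equiv 0$, so $\vc{b} = \partial_t \tvu + \tvu \cdot \Grad \tvu$, and \eqref{ws8} reduces on $\{|x|>R\}$, in view of the Newtonian form \eqref{ud10bis}, to the homogeneous Lamé system for $\partial_t \tvu$. Combined with the interior regularity \eqref{ud2} and Hölder continuity on the interface $\{|x|=R\}$, the standard exterior elliptic decay estimate yields $|\partial_t \tvu(t,x)| \aleq |x|^{-1}\|\partial_t \tvu(t,\cdot)\|_{C(|x|=R)}$, and hence $\vc{b} \in L^2(0,T; L^q(|x| > R; R^3))$ for every $q > 3$ as recorded in \eqref{ud10}. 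With this bound I apply Hölder as in \eqref{ud10a} with $1/q = 5/6 - 1/\gamma$ (strictly larger than $1/3$ precisely because $\gamma < 2$), absorb $\|\vu - \tvu\|_{L^6}^2$ into the viscous dissipation via \eqref{ud1}, and control $\|1_{\vr \leq \Ov{\vr}}\vr\|_{L^\gamma}^2$ by the relative energy using strict convexity of $P$ on $[0, \Ov{\vr}]$. A Gronwall argument applied to \eqref{ud5} then forces both the relative energy and the defect measure $\mathfrak{E}$ to vanish identically, giving $\vr = \tvr$ and $\vu = \tvu$.

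The main obstacle is the far-field elliptic estimate. One must carefully justify that $\partial_t \tvu$ weakly solves the homogeneous Lamé system on the exterior of $B_R$ (requiring both the Newtonian structure \eqref{ud10bis} and a genuine differentiation of \eqref{ws8} in $t$ across the vacuum region, where the regularity \eqref{ud2} is essential), and then invoke the correct decay of the exterior Lamé problem with finite-energy data. This is exactly the point where the restriction $\gamma < 2$ becomes unavoidable: it is forced by the interpolation exponent requirement $q > 3$, and the borderline case $\gamma = 2$ lies outside the reach of this scheme.
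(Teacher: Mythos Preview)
Your proposal is correct and follows the paper's own argument essentially step by step: the $\ep$-regularization and passage to \eqref{ud5}, the $\vr \gtrless \Ov{\vr}$ split, the spatial cut at radius $R$ with the $L^3 \aleq L^6$ bound inside, and the exterior Lam\'e decay $|\partial_t \tvu| \aleq |x|^{-1}$ feeding into \eqref{ud10a}. One small slip: in your parenthetical you write that $1/q = 5/6 - 1/\gamma$ is ``strictly larger than $1/3$'', whereas for $\gamma < 2$ one has $1/\gamma > 1/2$ and hence $1/q < 1/3$, i.e.\ $q > 3$, which is exactly what the $|x|^{-1}$ decay requires; the conclusion is right, only the inequality in the aside is reversed.
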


\begin{Remark}\label{udR1}

Local existence of strong solutions as well as global existence for small initial data in the class included in (\ref{ud2}) was proved by  Huang et al. \cite[Lemma 2.1]{HuLiXi}, see also Cho and Kim \cite[Theorem 3]{ChoKim} for local existence in the class of 
more regular solution.

\end{Remark}

The result can be extended to the case of general in/out flow boundary conditions in the spirit of Theorem \ref{wsT1bis}. The hypothesis of Newtonian viscous stress is however necessary.

\subsection{Positive density}

Finally, we consider the physically relevant case $\Omega = R^3$,
\begin{equation} \label{ud13}
\begin{split}
0 < \vr_0 (x) &\leq \Ov{\vr}, \\  |\Grad (\vr_0)^{\gamma - 1}(x) | &\aleq \frac{1}{1 + |x|^\alpha}, \ \alpha = \alpha(\gamma) > 0,
\end{split}
\end{equation}
meaning vacuum is not present but $\vr_0$ decays to zero as
$|x| \to \infty$. We consider the Newtonian viscous stress \eqref{ud10bis} and restrict slightly the class \eqref{ud2} of strong solutions
\begin{equation} \label{ud15}
\begin{split}
\tvu &\in C([0,T]; D^{1,2}_0 \cap D^{3,2}(R^3; R^3)) \cap L^2(0,T; D^{4,2}(R^3; R^3)),\\
\partial_t \tvu &\in L^\infty(0,T; D^{1,2}_0 (R^3, R^3)) \cap L^2(0,T; D^{2,2}(R^3; R^3)), \\
\tvr,\ p(\tvr) &\in C([0,T]; W^{3,2}(R^3)), \\
\sqrt{\vr} \partial^2_{t,t} \tvu &\in L^2(0,T; L^2(R^3; R^3)).
\end{split}
\end{equation}
introduced by Cho and Kim \cite{ChoKim}.

The first observation is that any solution belonging to \eqref{ud15} satisfies
\[
\tvu \in C([0,T]; BC^1(R^3; R^3)), \ \nabla^2 \tvu \in L^2(0,T; BC(R^3; R^9)).
\]
Thus a direct inspection of formula \eqref{sw4} reveals that the decay properties of the initial density propagate in time.
More specifically, we have
\[
\begin{split}
| \Grad (\vr_0)^{\gamma - 1}(x) | \aleq \frac{1}{1 + |x|^\alpha} \ &\Rightarrow \
(\vr_0)^{\gamma - 1}(x) \aleq \frac{1}{1 + |x|^{\alpha - 1}}
\ \Rightarrow \vr^{\gamma - 1}(t,x) \aleq \frac{1}{1 + |x|^{\alpha - 1}} \\
&\Rightarrow \ |\Grad (\vr(t,x))^{\gamma - 1} | \aleq \frac{1}{1 + |x|^{\alpha - 1}}
\end{split}
\]
In particular, we get
\begin{equation} \label{ud17}
\Grad P'(\tvr) \in L^\infty(0,T; L^1 \cap L^\infty(R^3; R^3))
\ \mbox{provided}\ \alpha > 2,
\end{equation}
and
\begin{equation} \label{ud16}
\tvr(t,x) \approx \frac{1}{1 + |x|^\beta} ,\ |\Grad \tvr(t,x)| \aleq \frac{1}{1 + |x|^\beta} \ \mbox{for all}\ t \in[0,T],\ x \in R^3,\
\beta = \frac{\alpha - 1}{\gamma - 1}.
\end{equation}

Consequently, all steps in Section \ref{ged} up to formula \eqref{ud6} can be performed and we are left with the integral \eqref{ud7}
\begin{equation} \label{ud18}
\int_0^\tau \int_{R^3} 1_{\vr \leq \Ov{\vr}} (\vr - \tvr) (\vu - \tvu) \cdot \Big( \partial_t \tvu + \tvu \cdot \Grad \tvu + \Grad P'(\tvr) \Big)
\dx \dt.
\end{equation}

We proceed in several steps:

{\bf Step 1.}

As $\tvu \in L^\infty((0,T) \times R^3; R^3)$, $\Grad \tvu \in L^\infty \cap L^2((0,T) \times R^3; R^9)$, we get, by interpolation,
\[
\vu \cdot \Grad \tvu \in L^2(0,T; L^3(R^3, R^3))
\]
Similarly, it follows from \eqref{ud16} that
\begin{equation} \label{ud19}
\Grad P'(\tvr) \in L^\infty(0,T; L^3 \cap L^\infty(R^3; R^3)).
\end{equation}
Consequently,
\[
\begin{split}
\int_{R^3} &1_{\vr \leq \Ov{\vr} } (\vr - \tvr) (\vu - \tvu) \cdot \Big( \tvu \cdot \Grad \tvu + \Grad P'(\tvr) \Big)
\dx \dt \\ &\leq \| \vr - \tvr \|_{L^2(R^3)} \| \vu - \tvu \|_{L^6(R^3; R^3)} \| \tvu \cdot \Grad \tvu + \Grad P'(\tvr) \|_{L^3(R^3;R^3)}
\end{split},
\]
and we may use the arguments of Section \ref{ged} to control the expression on the right--side. Thus \eqref{ud18} reduces to
\begin{equation} \label{ud20}
\int_0^\tau \int_{R^3} 1_{\vr \leq \Ov{\vr}} (\vr - \tvr)(\vu - \tvu) \cdot \partial_t \tvu \dx \dt.
\end{equation}

{\bf Step 2.} To control $\partial_t \tvu$, we differentiate the momentum equation with respect to time obtaining
\begin{equation} \label{ud20bis}
\mathcal{L}[ \partial_t \tvu] = \partial^2_{t,t} (\tvr \tvu) + \Div ( \partial_t (\tvr \tvu \otimes \tvu ) ) +
\Grad \partial_t p(\tvr),
\end{equation}
where $\mathcal{L}$ denotes the Lam\' e elliptic operator
\begin{equation} \label{ud21}
\mathcal{L}[\vc{v}] = \mu \Div \left( \Grad \vc{v} + \Grad \vc{v}^t - \frac{2}{3} \Div \vc{v} \mathbb{I} \right) +
\lambda \Grad \Div \vc{v}.
\end{equation}
Now,
\[
\begin{split}
\partial_t (\tvr \tvu \otimes \tvu) &= \partial_t \tvr (\tvu \otimes \tvu) + \tvr (\partial_t \tvu \otimes \tvu) +
\tvr (\tvu \otimes \partial_t \tvu) \\ &= - \tvu \cdot \Grad \tvr (\tvu \otimes \tvu)
- \tvr \Div \tvu (\tvu \otimes \tvu) + \tvr (\partial_t \tvu \otimes \tvu) +
\tvr (\tvu \otimes \partial_t \tvu).
\end{split}
\]
By H\" older's inequality
\[
\| \tvu \cdot \Grad \tvr (\tvu \otimes \tvu) \|_{L^{\frac{3}{2}}(R^3; R^3) }
\leq \| \tvu \|_{L^6(R^3; R^3)}^3 \| \Grad \tvr \|_{L^6(R^3; R^3)}.
\]
Similarly
\[
\| \tvr \Div \tvu (\tvu \otimes \tvu) \|_{L^{\frac{3}{2}}(R^3; R^3) }  \leq
\| \tvr \|_{L^\infty(R^3)} \| \Div \tvu \|_{L^2 (R^3)} \| \tvu \|^2_{L^6(R^3)},
\]
and
\[
\| \tvr (\partial_t \tvu \otimes \tvu) +
\tvr (\tvu \otimes \partial_t \tvu) \|_{L^{\frac{3}{2}}(R^3; R^3) } \leq 2 \| \tvr \|_{L^3(R^3)}
\| \tvu \|_{L^6(R^3; R^3)} \| \partial_t \tvu \|_{L^6(R^3; R^3)}.
\]
As $(\tvr, \tvu)$ belongs to the class \eqref{ud15} and $\tvr$ satisfies \eqref{ud19}, it is easy to check that
\begin{equation} \label{ud22}
\partial_t (\tvr \tvu \otimes \tvu) \in L^2(0,T; L^{\frac{3}{2}}(R^3; R^9)).
\end{equation}

Next,
\[
\partial_t p(\tvr) = - p'(\tvr) \Big( \Grad \tvr \cdot \tvu - \tvr \Div \tvu \Big),
\]
where
\[
\| p'(\tvr) \Grad \tvr \cdot \tvu \|_{L^{\frac{3}{2}}(\Omega)} \aleq \| \tvr \|_{L^6(R^3)} \| \Grad P'(\tvr) \|_{L^3(R^3)} \| \tvu \|_{L^6(R^3)}
\]
Similarly to the above, we conclude
\begin{equation} \label{ud24}
\partial_t p(\tvr) \in L^\infty (0,T; L^{\frac{3}{2}}(R^3)).
\end{equation}
Going back to the equation \eqref{ud20bis}, we get
\[
\mathcal{L} [\partial_t \tvu] = \partial^{2}_{t,t} (\tvr \tvu) + \Div \mathbb{B}
\]
where
\[
\mathbb{B} \in L^2(0,T; L^{\frac{3}{2}}(R^3; R^9)).
\]
In view of the standard elliptic estimates, we may write
\[
\partial_t \tvu = \vc{z} + \vc{v},
\]
where
\begin{equation} \label{ud25}
\mathcal{L} [\vc{z}] = \partial^{2}_{t,t} (\tvr \tvu),
\end{equation}
while
\[
\vc{v} \in L^2(0,T; D^{1,\frac{3}{2}}_0(R^3; R^3)) \hookrightarrow L^2(0,T; L^3(R^3; R^3)).
\]
In view of the arguments employed in {\bf Step 1}, our task reduces to controlling the integral
\begin{equation} \label{ud26}
\int_0^\tau \int_{R^3} 1_{\vr \leq \Ov{\vr}} (\vr - \tvr)(\vu - \tvu) \cdot \vc{z} \dx \dt.
\end{equation}

{\bf Step 3.}

First, we write the right--hand side of the elliptic equation \eqref{ud25} in the form
\[
\partial^2_{t,t} (\tvr \tvu) = \partial^2_{t,t} \tvr \tvu + \tvr \partial^2_{t,t} \tvu - 2 \Div (\tvr \tvu) \partial_t \tvu
= - \partial_t \Div (\tvr \tvu) \tvu + \tvr  \partial^2_{t,t} \tvu  - 2
\tvr \Div(\tvu) \partial_t \tvu - 2 \Grad \tvr \cdot \tvu \partial_t \tvu
\]
where, furthermore,
\[
\partial_t \Div (\tvr \tvu) \tvu = \Div \left( \partial_t (\tvr \tvu)  \otimes \tvu \right) -
\partial_t (\tvr \tvu) \cdot \Grad \tvu.
\]
As the term $\Div \left( \partial_t (\tvr \tvu)  \otimes \tvu \right)$ can be handled exactly as in {\bf Step 2}, our task reduces to
estimating the integral
\begin{equation} \label{ud26bis}
\int_0^\tau \int_{R^3} 1_{\vr \leq \Ov{\vr}} (\vr - \tvr)(\vu - \tvu) \cdot \vc{w} \dx \dt,
\end{equation}
where $\vc{w}$ solves the elliptic system
\begin{equation} \label{ud25bis}
\mathcal{L}[\vc{w}] = \vc{g},
\end{equation}
with the right--hand side
\begin{equation} \label{ud28bis}
\vc{g} = \tvr \partial_t \tvu \cdot \Grad \tvu - \tvr \Div \tvu (\tvu \cdot \Grad \tvu) +
\tvu \Grad \tvr \cdot \tvu \cdot \Grad \tvu +
\tvr  \partial^2_{t,t} \tvu  - 2
\tvr \Div(\tvu) \partial_t \tvu - 2 \Grad \tvr \cdot \tvu \partial_t \tvu.
\end{equation}

Our goal is to show that that function $\vc{w}$ decays to zero for large $x$, more specifically, we shall see that
\begin{equation} \label{ud27}
|\vc{w}(t,x)| \leq \chi(t) \frac{1}{|x|} \ \mbox{with}\ \chi \in L^2(0,T).
\end{equation}
Taking \eqref{ud27} for granted, we use Hardy's inequality to estimate the integral \eqref{ud26bis},
\[
\begin{split}
\int_{R^3} 1_{\vr \leq \Ov{\vr}} (\vr - \tvr)(\vu - \tvu) \cdot \vc{w} \dx \leq
\delta \int_{R^3} \frac{ |\vu - \tvu|^2 }{|x|^2} \dx + c(\delta) \int_{R^3} \chi^2 1_{\vr \leq \Ov{\vr}} |\vr - \tvr|^2\\
\aleq \delta \int_{R^3} |\Grad \vu - \Grad \tvu|^2 \dx + c(\delta)
\int_{R^3} \chi^2 \mathcal{E} \left( \vr, \vu \Big| \tvr, \tvu \right), \ \delta > 0 \ \mbox{arbitrary.}
\end{split}
\]
Thus the proof of weak strong uniqueness can be completed via Gronwall's argument exactly as in Section \ref{wsbd}.

It remains to show \eqref{ud27}. As $\vc{w}$ solves the elliptic problem \eqref{ud25bis}, we get
\[
\vc{w}(t,x) = \int_{R^3} \mathbb{G}(x,y) \cdot \vc{g} (t, y) \D y,
\]
where $\mathbb{G}$ is the Green kernel associated to $\mathcal{L}$. Writing
\[
\begin{split}
\int_{R^3} \mathbb{G}(x,y) \cdot \vc{g} (t, y) \D y =
\frac{1}{|x|} \int_{R^3} (|x| - |y|) \mathbb{G}(x,y) \cdot \vc{g} (t, y) \D y \\ +
\frac{1}{|x|} \int_{R^3} |y| \mathbb{G}(x,y) \cdot \vc{g}  (t, y) \D y
\end{split}
\]
and using the fact
\[
|\mathbb{G}(x,y)| \aleq \frac{1}{|x - y|},
\]
we observe that \eqref{ud27} follows provided
\begin{equation} \label{ud28}
\| \vc{g}(t, \cdot) \|_{L^1(R^3; R^3)} + \| |x| \vc{g} (t, \cdot) \|_{L^q(R^3; R^3)}  \leq \chi(t) \ \mbox{for some}\
\ \chi \in L^2(0,T),\ q \in (\frac{3}{2} - \ep, \frac{3}{2} + \ep), \ep > 0.
\end{equation}

Thus our ultimate goal is to check \eqref{ud28} for all terms appearing on the right--hand side of \eqref{ud28bis}.
First,
we have
\[
\| \tvr \partial_t \tvu \cdot \Grad \tvu \|_{L^1(R^3)} \leq \| \tvr \|_{L^3(R^3)} \| \partial_t \tvu \|_{L^6(R^3)}
\| \Grad \tvu \|_{L^2(R^3)},
\]
and
\[
 \| |x| \tvr \partial_t \tvu \cdot \Grad \tvu \|_{L^q(R^3)} \leq \| \ |x| \tvr \ \|_{L^q(R^3)} \| \partial_t \tvu \|_{L^\infty(R^3)}
\| \Grad \tvu \|_{L^\infty(R^3)}.
\]
In view of \eqref{ud15}, the bound \eqref{ud28} holds as soon as $\beta > 3$. The same argument applies to
the terms $\tvr \Div \tvu \partial_t \tvu$, $\tvr \Div \tvu (\tvu \cdot \Grad \tvu)$. Moreover, seeing that $\Grad \tvr$ decays at least
as fast as $\tvr$, we can handle $\tvu \Grad \tvr \cdot \tvu \cdot \Grad \tvu$ and $\Grad \tvr \cdot \tvu \partial_t \tvu$ in the same manner. Here, we have used the fact that $\tvu$ is uniformly bounded.

Finally,
\[
\| \tvr \partial^2_{t,t} \tvu \|_{L^1(R^3)} \leq \| \sqrt{\tvr} \|_{L^2(R^3)} \| \sqrt{\tvr}  \partial^2_{t,t} \tvu \|_{L^2(R^3)},
\]
\[
\| |x| \tvr \partial^2_{t,t} \tvu \|_{L^q(R^3)} \leq \| |x| \sqrt{\tvr} \|_{L^p(R^3)} \| \sqrt{\tvr}  \partial^2_{t,t} \tvu \|_{L^2(R^3)}, \
\frac{1}{p} = \frac{1}{q} - \frac{1}{2}.
\]
Thus the choice $\beta > 3$ yields \eqref{ud28}.

We have shown the following result:

\begin{Theorem} [{\bf Weak--strong uniqueness, positive density}] \label{wsT5}

Suppose that $\Omega = R^3$ and $1 < \gamma \leq 2$. In addition, let $\mathbb{S}$ be Newtonian,
\[
\mathbb{S} = \mu \left( \Grad \vu + \Grad^t \vu - \frac{2}{3} \Div \vu \mathbb{I} \right) + \lambda \Div \vu \mathbb{I},\ \mu > 0,\ \lambda \geq 0.
\]
Let $(\tvr, \tvu)$ be a strong solution of the problem \eqref{I1}, \eqref{I2} in $(0,T) \times R^3$ in the sense specified in Section \ref{Ssw1} belonging to the class
\eqref{ud15}.
Let $(\vr, \vu)$ be a weak solution of the same problem in the sense of Definition \ref{Dsw1} such that
\[
\vr(0, \cdot) = \tvr(0, \cdot) = \vr_0, \ (\vr \vu)(0, \cdot) = (\tvr \tvu)(0, \cdot) = \vm_0,
\]
where
\[
\begin{split}
0 < \vr_0(x) &\leq \Ov{\vr}, \\
|\Grad (\vr_0)^{\gamma -1} (x) |  &\aleq \frac{1}{1 + |x|^\alpha},\ \alpha > \max \left\{2; 3\gamma - 2 \right\}.
\end{split}
\]

Then
\[
\vr = \tvr, \ \vu = \tvu \ \mbox{in}\ (0,T) \times R^3.
\]

\end{Theorem}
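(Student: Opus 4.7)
My plan is to transport the strategy of Theorem \ref{wsT1} to the whole space, starting from the relative energy inequality applied to $(\tvr + \ep, \tvu)$ and passing to the limit $\ep \to 0$. Because the initial density is bounded and strictly positive and its $\gamma-1$ power has integrable gradient, Lemma \ref{Lsw1} (applied after suitable extensions) together with the regularity \eqref{ud15} propagates the pointwise decay: setting $\beta = (\alpha-1)/(\gamma-1)$, we get $\tvr(t,x) \lesssim (1+|x|)^{-\beta}$, $|\Grad \tvr| \lesssim (1+|x|)^{-\beta}$, $\Grad P'(\tvr) \in L^\infty(0,T; L^1\cap L^\infty)$, and the hypothesis $\alpha > 3\gamma-2$ enforces $\beta > 3$. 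Repeating verbatim the manipulations of Section \ref{wsbd} (the bounded case) reduces everything to the remainder integral
\[
\int_0^\tau \int_{R^3} \ind_{\vr \leq \Ov\vr} (\vr - \tvr)(\vu - \tvu) \cdot \bfb \, \dx \, \dt, \qquad \bfb = \partial_t \tvu + \tvu \cdot \Grad \tvu + \Grad P'(\tvr),
\]
from which the $\tvu \cdot \Grad \tvu$ and $\Grad P'(\tvr)$ contributions are immediately controlled by $\| \vr - \tvr \|_{L^2} \| \vu - \tvu \|_{L^6}$ since both lie in $L^2(0,T; L^3)$ (by interpolation of \eqref{ud15} and the decay of $\Grad P'(\tvr)$).

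The crux is therefore the contribution of $\partial_t \tvu$, which is not in $L^2(0,T; L^3(R^3))$ a priori. I would recover the needed spatial decay by treating $\partial_t \tvu$ as the solution of the Lam\'e system obtained by differentiating the momentum equation \eqref{ws8} in time:
\[
\mathcal{L}[\partial_t \tvu] = \partial^2_{t,t}(\tvr \tvu) + \Div(\partial_t(\tvr \tvu \otimes \tvu)) + \Grad \partial_t p(\tvr).
\]
Using the continuity equation to eliminate time derivatives of $\tvr$, H\"older bookkeeping based on \eqref{ud15} and the decay \eqref{ud16}--\eqref{ud17} shows that the last two sources lie in $L^2(0,T; L^{3/2})$, so by elliptic regularity their contribution to $\partial_t \tvu$ is in $L^2(0,T; D^{1,3/2}) \hookrightarrow L^2(0,T; L^3)$ and can be absorbed as in the previous paragraph. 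What is left is the purely inertial piece $\vc w$ solving $\mathcal{L}[\vc w] = \vc g$, where $\vc g$ collects the genuinely quadratic terms in $(\tvr, \tvu, \partial_t \tvu, \partial^2_{t,t} \tvu)$ listed in \eqref{ud28bis}.

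For this $\vc w$ I would prove the pointwise bound $|\vc w(t,x)| \leq \chi(t)/|x|$ with $\chi \in L^2(0,T)$. Writing $\vc w(t,x) = \int \mathbb{G}(x,y) \cdot \vc g(t,y)\,\dy$, splitting $\mathbb{G}(x,y) = \frac{1}{|x|}(|x|-|y|)\mathbb{G} + \frac{|y|}{|x|}\mathbb{G}$ and using $|\mathbb{G}(x,y)| \lesssim |x-y|^{-1}$, the estimate reduces to controlling $\|\vc g(t,\cdot)\|_{L^1}$ and $\|\,|x|\vc g(t,\cdot)\|_{L^q}$ for some $q$ close to $3/2$ in $L^2(0,T)$; these bounds follow termwise from \eqref{ud15} and the decay $\beta > 3$, the weighted estimate on $\tvr \partial^2_{t,t}\tvu$ being supplied by the extra regularity $\sqrt{\tvr}\,\partial^2_{t,t}\tvu \in L^2(0,T; L^2)$. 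Once \eqref{ud27} is in hand, Hardy's inequality yields
\[
\int_{R^3} \ind_{\vr \leq \Ov\vr}(\vr-\tvr)(\vu - \tvu)\cdot \vc w\, \dx \leq \delta \int_{R^3} |\Grad \vu - \Grad \tvu|^2 \dx + c(\delta)\chi^2 \!\int_{R^3} E(\vr,\vu \,|\, \tvr, \tvu),
\]
and choosing $\delta$ small, absorbed into the Korn term, Gronwall's lemma applied to the (augmented) relative energy closes the argument and forces $\vr=\tvr$, $\vu = \tvu$. The main obstacle, as indicated, is the elliptic/decay analysis of $\vc w$: managing the interplay between the loss of compact support of $\tvr$, the infinite speed of propagation of $\partial_t \tvu$, and the weighted $L^q$ estimate on $\vc g$ is what dictates the quantitative threshold $\alpha > \max\{2,3\gamma-2\}$ on the initial decay.
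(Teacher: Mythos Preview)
Your proposal is correct and follows essentially the same route as the paper: the $\ep$-regularized relative energy, propagation of the decay $\beta=(\alpha-1)/(\gamma-1)>3$, reduction to the $\partial_t\tvu$ contribution, the Lam\'e decomposition $\mathcal{L}[\partial_t\tvu]=\partial^2_{t,t}(\tvr\tvu)+\Div(\cdots)+\Grad(\cdots)$ with the divergence/gradient sources handled in $L^2(0,T;L^{3/2})\hookrightarrow$ contributions in $L^2(0,T;L^3)$, and finally the Green-kernel pointwise bound $|\vc w|\leq\chi(t)/|x|$ from $\|\vc g\|_{L^1}+\| |x|\vc g\|_{L^q}$ combined with Hardy's inequality and Gronwall. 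The only cosmetic difference is that the paper separates the passage from $\partial^2_{t,t}(\tvr\tvu)$ to \eqref{ud28bis} into an explicit intermediate step (extracting $\Div(\partial_t(\tvr\tvu)\otimes\tvu)$), which you have folded into your summary.
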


{
\begin{Remark}\label{udR1a}
 
Local existence of strong solutions in the class (\ref{ud15}) was proved by Cho and Kim \cite[Theorem 3]{ChoKim}, see also Huang et al. \cite[Lemma 2.1]{HuLiXi}.

\end{Remark}

Similarly to the preceding results, Theorem \eqref{wsT5} can be extended 
to exterior domains with general in/out flow boundary conditions. Vacuum can be accommodated at the expense of various technical 
difficulties.
Finally, it is worth noting that the condition $\alpha > 3 \gamma - 2$ in Theorem \ref{wsT5} is critical for the fluid to have finite mass, namely
\[
\int_{R^3} \tvr \dx < \infty.
\]

}

\def\cprime{$'$} \def\ocirc#1{\ifmmode\setbox0=\hbox{$#1$}\dimen0=\ht0
  \advance\dimen0 by1pt\rlap{\hbox to\wd0{\hss\raise\dimen0
  \hbox{\hskip.2em$\scriptscriptstyle\circ$}\hss}}#1\else {\accent"17 #1}\fi}

%\bibliography{citace}

\begin{thebibliography}{10}

\bibitem{AbbFeiNov}
A.~Abbatiello, E.~Feireisl, and A.~Novotn\'{y}.
\newblock Generalized solutions to models of compressible viscous fluids.
\newblock {\em Discrete Contin. Dyn. Syst.}, {\bf 41}(1):1--28, 2021.

\bibitem{BuGwMaSG}
M.~Bul\'{\i}\v{c}ek, P.~Gwiazda, J.~M\'{a}lek, and A.~\'{S}wierczewska Gwiazda.
\newblock On unsteady flows of implicitly constituted incompressible fluids.
\newblock {\em SIAM J. Math. Anal.}, {\bf 44}(4):2756--2801, 2012.

\bibitem{ChoChoeKim}
Y.~Cho, H.J. Choe, and H.~Kim.
\newblock Unique solvability of the initial boundary value problems for
  compressible viscous fluids.
\newblock {\em J. Math. Pures. Appl.}, {\bf 83}:243--275, 2004.

\bibitem{ChoKim}
Y.~Cho and H.~Kim.
\newblock On classical solutions of the compressible {N}avier-{S}tokes
  equations with nonnegative initial densities.
\newblock {\em Manuscripta Math.}, {\bf 120}(1):91--129, 2006.

\bibitem{FanZhuGuo}
L.~Fang, H.~Zhu, and Z.~Guo.
\newblock Global classical solution to a one-dimensional compressible
  non-{N}ewtonian fluid with large initial data and vacuum.
\newblock {\em Nonlinear Anal.}, {\bf 174}:189--208, 2018.

\bibitem{FeJiNo}
E.~Feireisl, Bum~Ja Jin, and A.~Novotn{\' y}.
\newblock Relative entropies, suitable weak solutions, and weak-strong
  uniqueness for the compressible {N}avier-{S}tokes system.
\newblock {\em J. Math. Fluid Mech.}, {\bf 14}:712--730, 2012.

\bibitem{FeiKwoNov}
E.~Feireisl, Y.-S. Kwon, and A.~Novotn{\' y}.
\newblock On the long--time behavior of dissipative solutions to models of
  non-{N}ewtonian compressible fluids.
\newblock {\em Archive Preprint Series}, 2020.
\newblock {\bf arxiv preprint No. 2001.03313}.

\bibitem{GoLiLiZh}
H.~Gong, J.~Li, X.-Gao Liu, and X.~Zhang.
\newblock Local well-posedness of isentropic compressible {N}avier-{S}tokes
  equations with vacuum.
\newblock {\em Commun. Math. Sci.}, {\bf 18}(7):1891--1909, 2020.

\bibitem{HuLiXi}
X.~Huang, J.~Li, and Z.~Xin.
\newblock Global well-posedness of classical solutions with large oscillations
  and vacuum to the three-dimensional isentropic compressible {N}avier-{S}tokes
  equations.
\newblock {\em Comm. Pure Appl. Math.}, {\bf 65}(4):549--585, 2012.

\bibitem{KaMaNe}
M.~Kalousek, V.~M{\' a}cha, and {\v S.}~Ne{\v c}asov{\' a}.
\newblock Local-in-time existence of strong solutions to a class of
  compressible non-{N}ewtonian {N}avier-{S}tokes equations.
\newblock {\em Archive Preprint Series}, 2020.
\newblock {\bf arxiv preprint No. 2012.01795v1}.

\bibitem{LiXin}
J.~Li and Z.~Xin.
\newblock Global well-posedness and large time asymptotic behavior of classical
  solutions to the compressible {N}avier-{S}tokes equations with vacuum.
\newblock {\em Ann. PDE}, {\bf 5}(1):Paper No. 7, 37, 2019.

\bibitem{Liang}
Z.~Liang.
\newblock Global {S}trong {S}olutions of {N}avier-{S}tokes {E}quations for
  {H}eat-{C}onducting {C}ompressible {F}luids with {V}acuum at {I}nfinity.
\newblock {\em J. Math. Fluid Mech.}, {\bf 23}(1):17, 2021.

\bibitem{LI4}
P.-L. Lions.
\newblock {\em Mathematical topics in fluid dynamics, Vol.2, Compressible
  models}.
\newblock Oxford Science Publication, Oxford, 1998.

\bibitem{MeRaRoSz}
F~Merle, P.~Raphael, I.~Rodnianski, and J.~Szeftel.
\newblock On the implosion of a three dimensional compressible fluid.
\newblock {\em Arxive Preprint Series}, {\bf arxiv preprint No. 1912.11009},
  2019.

\bibitem{Roza}
O.~Rozanova.
\newblock Blow-up of smooth highly decreasing at infinity solutions to the
  compressible {N}avier-{S}tokes equations.
\newblock {\em J. Differential Equations}, {\bf 245}(7):1762--1774, 2008.

\bibitem{VAZA}
A.~Valli and M.~Zajaczkowski.
\newblock {N}avier-{S}tokes equations for compressible fluids: Global existence
  and qualitative properties of the solutions in the general case.
\newblock {\em Commun. Math. Phys.}, {\bf 103}:259--296, 1986.

\bibitem{XIN}
Z.~Xin.
\newblock Blowup of smooth solutions to the compressible {N}avier-{S}tokes
  equation with compact density.
\newblock {\em Commun. Pure Appl. Math.}, {\bf 51}:229--240, 1998.

\end{thebibliography}
%\bibliographystyle{plain}

\end{document}